\providecommand{\U}[1]{\protect\rule{.1in}{.1in}}
\newtheorem{theorem}{Theorem}
\newtheorem{corollary}[theorem]{Corollary}
\newtheorem{definition}[theorem]{Definition}
\newtheorem{lemma}[theorem]{Lemma}
\newtheorem{problem}[theorem]{Problem}
\newtheorem{proposition}[theorem]{Proposition}
\newtheorem{remark}[theorem]{Remark}
\newtheorem{observation}[theorem]{Observation}
\newenvironment{proof}[1][Proof]{\noindent\textbf{#1.} }{\ \hfill \rule{0.5em}{0.5em}\bigskip}
\begin{document}

\title{Resolving vertices of graphs with differences}
\author{Iztok Peterin$^{a,b}$, Jelena Sedlar$^{c,e}$, Riste \v{S}krekovski$^{b,d,e}$
and Ismael G. Yero$^{f}$\\{\small $^{a}$ \textit{University of Maribor, FEECS, Maribor, Slovenia }}\\[0.1cm] {\small $^{b}$ \textit{Institute of Mathematics, Physics and
Mechanics, Ljubljana, Slovenia }}\\[0.1cm] {\small $^{c}$ \textit{University of Split, Faculty of civil
engineering, architecture and geodesy, Split, Croatia }}\\[0.1cm] {\small $^{d}$ \textit{University of Ljubljana, FMF, Ljubljana,
Slovenia }}\\[0.1cm] {\small $^{e}$ \textit{Faculty of Information Studies, Novo Mesto,
Slovenia }}\\[0.1cm] {\small $^{f}$ \textit{Universidad de C\'{a}diz, Departamento de
Matem\'{a}ticas, Algeciras Campus, Spain}}}
\maketitle

\begin{abstract}
The classical (vertex) metric dimension of a graph $G$ is defined as the
cardinality of a smallest set $S\subseteq V(G)$ such that any two vertices $x$
and $y$ from $G$ have different distances to least one vertex from $S.$ The
$k$-metric dimension is a generalization of that notion where it is required
that any pair of vertices has different distances to at least $k$ vertices
from $S.$ In this paper, we introduce the weak $k$-metric dimension of a graph
$G,$ which is defined as the cardinality of a smallest set of vertices $S$
such that the sum of the distance differences from any pair of vertices to all
vertices of $S$ is at least $k.$ This dimension is "stronger" than the
classical metric dimension, yet "weaker" than $k$-metric dimension, and it can
be formulated as an ILP problem. The maximum $k$ for which the weak $k$-metric
dimension is defined is denoted by $\kappa(G).$ We first prove several
properties of the weak $k$-metric dimension regarding the presence of true or
false twin vertices in a graph. Using those properties, the $\kappa(G)$ is
found for some basic graph classes, such as paths, stars, cycles, and complete
(bipartite) graphs. We also find $\kappa(G)$ for trees and grid graphs using
the observation that the distance difference increases by the increase of the
cardinality of a set $S.$ For all these graph classes we further establish the
exact value of the weak $k$-metric dimension for all $k\leq\kappa(G).$

\end{abstract}

\textit{Keywords:} weak $k$-resolving set; weak $k$-metric dimension.

\textit{AMS Subject Classification numbers:} 05C12

\section{Introduction}

This article deals with topics on metric dimension, which is a classical
parameter in graph theory, already known from 1970's decade. It is usually
said that the first contributions on this topic separately appeared in
\cite{Slater1975} and \cite{Harary1976}, although some notions about the
metric dimension are earlier known from \cite{Blumenthal}. This parameter is
nowadays very well studied and one can find a rich literature on it. Recent
surveys about this are \cite{Tillquist,Kuziak}. Moreover, some recent and
significant works on the topic are \cite{Claverol,Corregidor,Geneson,Hakanen,
Klavzar, Mashkaria,Sedlar,Wu}. In order to better proceed with the flow of our
exposition, we first present the main definitions of our work, and further
describe the motivations of it.

From now on, all graphs considered are simple and connected. The
\emph{distance} between a pair of vertices $u,v$ of a graph $G,$ denoted by
$d_{G}(u,v),$ is the length of a shortest path connecting $u$ and $v$ in $G.$
When the graph $G$ is clear from the context we will write $d(u,v)$ for short.
Given a graph $G$ and three vertices $x,y$ and $s,$ we define the
\emph{distance difference (between }$x$\emph{ and }$y$\emph{ regarding }%
$s$\emph{)}
\[
\Delta_{s}(x,y)=|d_{G}(x,s)-d_{G}(y,s)|.
\]
Now, given a set of vertices $S\subseteq V(G)$ and an integer $k\geq1$, we say
that $S$ is a \emph{weak $k$-resolving set} of $G$ if the total distance
difference over the set $S$ satisfies $\sum_{s\in S}\Delta_{s}(x,y)\geq k$ for
every pair of vertices $x,y\in V(G)$. Also, a weak $k$-resolving set of the
smallest possible cardinality in $G$ is a \emph{weak $k$-metric basis} and its
cardinality the \emph{weak} $k$-\emph{metric dimension} of $G$, which is
denoted by $\operatorname*{wdim}_{k}(G)$. Note that every weak $k$-resolving
set $S$ satisfies that $|S|\geq k$ and that, if $k>1$, then $S$ is also a weak
$(k-1)$-resolving set. Thus, $\operatorname*{wdim}_{k}(G)\leq
\operatorname*{wdim}_{k+1}(G).$

The reader can immediately observe that it is not possible to compute the weak
$k$-metric dimension of a given graph for every integer $k\geq1.$ In this
sense, for each graph $G$, we define $\kappa(G)$ as the largest positive
integer $k$ such $G$ contains a weak $k$-resolving set, and say that $G$ is
\emph{weak }$\kappa(G)$\emph{-metric dimensional}. We also say that the set of
integers $\{1,\dots,\kappa(G)\}$ is the set of \emph{suitable values} for
computing $\operatorname*{wdim}_{k}(G)$. We use notation $[n]=\{1,\ldots,n\}$
and with this the set of suitable values of a graph $G$ is denoted by
$[\kappa(G)].$

For a vertex $x\in V(G)$, the \emph{open neighborhood} $N_{G}(x)$ of $x$ is
the set of all neighbors of $x,$ and the \emph{closed neighborhood} $N_{G}[x]$
is defined as $N_{G}(x)\cup\{x\}$. As usual, the degree of $x$ is
$d_{G}(x)=\left\vert N_{G}(x)\right\vert .$ Now, two vertices $x,y$ are
\emph{false twins} if $N_{G}(x)=N_{G}(y)$, and $x,y$ are \emph{true twins} if
$N_{G}[x]=N_{G}[y]$. Two different vertices $x,y$ are \emph{twins} if they are
either false or true twins. Finally, for a set of vertices $S\subseteq V(G),$
we define the \emph{open neighborhood} $N_{G}(S)=\cup_{x\in S}N_{G}(x)$ and
the \emph{closed neighborhood} $N_{G}[S]=\cup_{x\in S}N_{G}[x].$

\section{Motivation}

The classical metric dimension of a graph $G$ searches for the cardinality of
the smallest set $S$ of vertices of $G$ such that for every two vertices
$x,y\in V(G)$ there is a vertex $s\in S$ such that $d_{G}(x,s)\neq d_{G}%
(y,s)$. Such set uniquely \textquotedblleft identifies\textquotedblright\ all
the vertices of the graph, and this property has proved to be very useful for
location problems in networks. However, although this set $S$ (through some
vertex $s\in S$) identifies the pair of vertices $x,y$, it does not give any
hint into how much \textquotedblleft large\textquotedblright\ this
\textquotedblleft identification\textquotedblright\ property is with respect
to the set $S$ (or to the vertex $s\in S$). This suggest the idea of using
some metric that would quantify this difference. There could be clearly
several possible metrics that can make so, but one might immediately think
about a typical one, which is that of a modular distance. That is, for a
vertex $s$ and two vertices $x,y$, this can be represented as the previously
mentioned $\Delta_{s}(x,y)=|d_{G}(x,s)-d_{G}(y,s)|$.

Now, in order to construct some structure that can play some significant role
into quantifying the differences mentioned above, there could be two possible
directions to consider. A first one which is more local, could be searching
for a set $S\subseteq V(G)$ such that for any two vertices $x,y\in V(G)$, it
follows that $\Delta_{s}(x,y)\geq k$ for some $s\in S$. However, this approach
turns out to be relatively useless and too restrictive. Consequently, one
might instead consider a \textquotedblleft more global\textquotedblright%
\ setting, that would involve not only a vertex of the given set $S$, but the
whole set $S$. If the quantification of differences is $\sum_{s\in S}%
\Delta_{s}(x,y)$, clearly the whole set of vertices $S$ is involved. If we
want to have a threshold for the differences, then we need to delimit a value
$k$ (clearly positive) for them. This is traduced to setting an integer
$k\geq1$ such that $\sum_{s\in S}\Delta_{s}(x,y)\geq k$ for every two vertices
$x,y$ of the graph $G$, and gives sense to considering the weak $k$-metric
dimension of graphs.

\paragraph{ILP model.}

The problem of finding a smallest weak $k$-resolving set $S$ and consequently
the weak $k$-metric dimension $\operatorname*{wdim}_{k}(G)$ of a graph $G$ can
be formulated as an integer linear programming model. Let $V(G)=\{v_{1}%
,\ldots,v_{n}\}$ be the set of vertices of a graph $G.$ For a set of vertices
$S\subseteq V(G),$ the integer variable $x_{i}$ is defined by
\[
x_{i}=\left\{
\begin{array}
[c]{ll}%
1 & \text{if }v_{i}\in S,\\
0 & \text{if }v_{i}\not \in S.
\end{array}
\right.
\]
The problem of finding a smallest weak $k$-resolving set $S$ of $G$ is now
formulated as
\begin{equation}%
\begin{tabular}
[c]{ll}
& $\min\sum_{i=1}^{n}x_{i}\medskip$\\
s.t. & $\sum\limits_{i=1}^{n}\left\vert d(v_{a},v_{i})-d(v_{b},v_{i}%
)\right\vert \cdot x_{i}\geq k$ for any $v_{a},v_{b}\in V(G),$\\
& $x_{i}\in\{0,1\}$ for $1\leq i\leq n.$%
\end{tabular}
\ \ \ \ \ \ \label{For_ILP}%
\end{equation}

\paragraph{Comparison with $k$-metric dimension.}

The concept of metric dimension in graphs was generalized to that of
$k$-metric dimension in \cite{Estrada-Moreno2013} as follows. Given an integer
$k\geq1$, a set $S$ of vertices of $G$ is a $k$-\emph{resolving set} if for
any two vertices $x,y\in V(G)$, there exist $k$ distinct vertices $v_{1}%
,\dots,v_{k}\in S$ such that $d_{G}(x,v_{i})\neq d_{G}(y,v_{i})$ for every
$i\in\{1,\dots,k\}$. Also, the $k$-\emph{metric dimension} of $G$, denoted by
$\dim_{k}(G)$ is the cardinality of a smallest possible $k$-resolving set.
Clearly, the case $k=1$ represents the classical metric dimension. Moreover,
by $\kappa^{\prime}(G)$ we represent the largest integer $k$ for which $G$
contains a $k$-resolving set, and it is said that $G$ is $\kappa^{\prime}%
(G)$\emph{-metric dimensional}.

It is now readily observed that a $k$-resolving set in a graph $G$ is also a
weak $k$-resolving set, so $\kappa^{\prime}(G)\leq\kappa(G).$ However, the
contrary is not true in general. That is, if one can find a weak $k$-resolving
set in $G$, then such a set is not necessarily a $k$-resolving set, and even
more, there could be no $k$-resolving sets for such value $k$. This gives
sense to one part of the following relationships.

\begin{remark}
\label{rem:dim-wdim} For any graph $G$ and any positive integer $k\leq
\kappa^{\prime}(G)$,
\[
\dim(G)\leq\operatorname*{wdim}\nolimits_{k}(G)\leq\dim_{k}(G).
\]

\end{remark}

\begin{proof}
The second inequality was already explained. To see the first inequality,
consider any weak $k$-resolving set $S$. Since $\sum_{s\in S}\Delta
_{s}(x,y)\geq k$ for any two arbitrary vertices $x,y$ of $G$, we deduce that
$\sum_{s\in S}\Delta_{s}(x,y)\geq1$, which means that there must exist a
vertex $s^{\prime}\in S$ such that $1\leq\Delta_{s^{\prime}}(x,y)=|d_{G}%
(x,s^{\prime})-d_{G}(y,s^{\prime})|$. Thus, we have $d_{G}(x,s^{\prime})\neq
d_{G}(y,s^{\prime})$, and so, $S$ is a resolving set, which leads to the
desired result.
\end{proof}

From Remark \ref{rem:dim-wdim} we deduce the following consequence for the
particular case $k=1$.

\begin{corollary}
\label{cor:dim-equal-wdim-k-1} For any graph $G$, $\operatorname*{wdim}%
_{1}(G)=\dim_{1}(G)=\dim(G)$.
\end{corollary}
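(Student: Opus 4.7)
The plan is to apply Remark \ref{rem:dim-wdim} with the specific value $k=1$ and then recognize that both endpoints of the resulting chain of inequalities coincide. First, I would verify that $k=1$ is an admissible value on both sides. Every connected graph admits a classical resolving set (for instance, $V(G)$ itself, or any set of $n-1$ vertices), so $\kappa'(G)\geq 1$ and Remark \ref{rem:dim-wdim} is applicable at $k=1$. Plugging $k=1$ into the remark yields
\[
\dim(G)\leq \operatorname{wdim}\nolimits_{1}(G)\leq \dim_{1}(G).
\]

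Next, I would argue that the outer two quantities are equal directly from the definition of the $k$-metric dimension recalled in the paragraph on comparison with $k$-metric dimension. Indeed, a $1$-resolving set is by definition a set $S\subseteq V(G)$ such that for every pair $x,y\in V(G)$ there exists one vertex $v\in S$ with $d_G(x,v)\neq d_G(y,v)$; this is verbatim the definition of a classical resolving set. Hence $\dim_{1}(G)=\dim(G)$, and the displayed chain collapses to equalities, giving $\operatorname{wdim}_{1}(G)=\dim_{1}(G)=\dim(G)$.

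There is no real obstacle here: the only tiny point worth flagging is making sure that $k=1$ lies in the admissible range for invoking Remark \ref{rem:dim-wdim}, which is immediate since resolving sets exist in every connected graph. The corollary is essentially a bookkeeping consequence of the remark together with the fact that the $k$-metric dimension specialises to the classical metric dimension when $k=1$.
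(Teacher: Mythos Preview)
Your proposal is correct and follows exactly the approach the paper intends: the corollary is stated there without proof, merely as the specialisation of Remark~\ref{rem:dim-wdim} to $k=1$, together with the observation (made explicit in the paper's discussion of the $k$-metric dimension) that a $1$-resolving set is precisely a classical resolving set, so $\dim_{1}(G)=\dim(G)$. Your added check that $k=1$ lies in the admissible range is a harmless bit of extra care.
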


We may also recall that $\kappa^{\prime}(G)\leq\kappa(G)$ for any graph $G$,
and the inequality can be strict in many situations. For instance, it is
already known from \cite{Estrada-Moreno2013} that any graph $G$ is $2$-metric
dimensional if and only if $G$ has at least two twins (true or false). In
contrast with this, it can be established (see Corollary
\ref{prop:false-twins}) that graphs without true twins having false twins are
weak $4$-metric dimensional.

\paragraph{Comparison with local $k$-metric dimension.}

Another parameter which is closely related to our one comes as follows. The
local metric dimension of graphs was introduced in \cite{Okamoto} and after
this several articles on the topic have been developed (see the survey
\cite{Kuziak} for more information on this fact). A \emph{local resolving set}
of a graph $G$ is a set $S$ such that for any two adjacent vertices $x,y\in
V(G)$, there is a vertex $w\in S$ such that $d_{G}(x,w)\neq d_{G}(y,w)$ ($S$
uniquely identifies pairs of adjacent vertices). The \emph{local metric
dimension}, denoted by $\operatorname{ldim}(G)$, represents the cardinality of
a smallest local resolving set of $G$.

In a similar and natural manner, as the metric dimension was generalized, the
local metric dimension can also be generalized to the \emph{local $k$-metric
dimension}, which we denote by $\operatorname{ldim}_{k}(G)$. The concept of
local $k$-resolving set follows along the lines of the generalization. As we
next show, any weak $k$-resolving set is also a local $k$-resolving set.

\begin{lemma}
If $S$ is a weak $k$-resolving set of a graph $G$, then it is also a local
$k$-resolving set.
\end{lemma}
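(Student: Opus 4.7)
The plan is to exploit the fact that adjacent vertices are distance-$1$ apart, which forces $\Delta_s(x,y)$ to take only the values $0$ or $1$ for every potential landmark $s$. Once we know this, a total of at least $k$ over the set $S$ immediately translates into at least $k$ landmarks that actually resolve the pair.

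More precisely, I would begin by fixing an arbitrary pair of adjacent vertices $x,y\in V(G)$. By the triangle inequality applied to the edge $xy$, for every vertex $s\in V(G)$ we have
\[
|d_G(x,s)-d_G(y,s)|\leq d_G(x,y)=1,
\]
so $\Delta_s(x,y)\in\{0,1\}$. Therefore $\sum_{s\in S}\Delta_s(x,y)$ counts exactly the number of vertices $s\in S$ for which $d_G(x,s)\neq d_G(y,s)$.

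Since $S$ is a weak $k$-resolving set, this sum is at least $k$, so there exist $k$ distinct vertices $v_1,\dots,v_k\in S$ with $d_G(x,v_i)\neq d_G(y,v_i)$ for every $i\in\{1,\dots,k\}$. This is exactly the defining property of a local $k$-resolving set applied to the pair $x,y$. As the pair of adjacent vertices was arbitrary, $S$ is a local $k$-resolving set of $G$.

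The argument is essentially immediate once the triangle-inequality remark is in place; there is no real obstacle. The only point that requires a moment of care is observing that the values of $\Delta_s(x,y)$ are binary precisely because $x$ and $y$ are adjacent, which is what prevents the weaker sum-based condition from diluting into many small contributions and ensures that the required $k$ individual ``separations'' are literally present.
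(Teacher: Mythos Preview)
Your proof is correct and follows essentially the same approach as the paper: both arguments observe that for adjacent $x,y$ one has $\Delta_s(x,y)\in\{0,1\}$, so the condition $\sum_{s\in S}\Delta_s(x,y)\ge k$ forces at least $k$ vertices of $S$ to distinguish $x$ and $y$. Your presentation is slightly more explicit in invoking the triangle inequality, but the idea is identical.
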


\begin{proof}
The result follows by the fact that for any two adjacent vertices $x,y$ of $G$
and any vertex $s\in S$ it follows that the distances $d_{G}(x,s)$ and
$d_{G}(y,s)$ can differ by at most one. For a weak $k$-resolving set $S$ we
have $\sum_{s\in S}|d_{G}(x,s)-d_{G}(y,s)|\geq k$ since $S$ has at least $k$
vertices. Therefore, there are at least $k$ vertices $v_{1},\dots,v_{k}\in S$
such that $d_{G}(x,v_{i})\neq d_{G}(y,v_{i})$ for any $i\in\lbrack k]$, and
so, $S$ is a local $k$-resolving set.
\end{proof}

\begin{corollary}
For any graph $G$, it holds that $\operatorname*{ldim}_{k}(G)\leq
\operatorname*{wdim}_{k}(G)$ whenever both these dimensions are defined for
$k.$
\end{corollary}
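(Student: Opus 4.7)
The plan is to deduce the corollary directly from the immediately preceding lemma, which tells us that every weak $k$-resolving set of $G$ is simultaneously a local $k$-resolving set. The passage from a set-theoretic containment of this kind to the desired inequality between the two dimension parameters is standard: we look at the extremal object on the right-hand side and argue that it is a feasible candidate for the parameter on the left-hand side.

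Concretely, I would start by assuming that $\operatorname*{wdim}_k(G)$ is defined, i.e., that $G$ admits some weak $k$-resolving set, and let $S \subseteq V(G)$ be a weak $k$-metric basis of $G$, so that $|S| = \operatorname*{wdim}_k(G)$. Applying the previous lemma to $S$, we conclude that $S$ is a local $k$-resolving set of $G$. Since $\operatorname*{ldim}_k(G)$ is, by definition, the minimum cardinality of a local $k$-resolving set of $G$, this yields
\[
\operatorname*{ldim}_k(G) \;\leq\; |S| \;=\; \operatorname*{wdim}_k(G),
\]
which is the claimed inequality. The hypothesis that $\operatorname*{ldim}_k(G)$ is also defined is automatically guaranteed here, because the existence of the weak $k$-resolving set $S$ already produces a local $k$-resolving set, so the set of suitable values for the local $k$-metric dimension contains that of the weak one.

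There is essentially no obstacle; the corollary is a one-line consequence of the lemma, and the only thing to be careful about is the phrasing regarding when the parameters are defined. Hence the proof reduces to invoking the lemma on a weak $k$-metric basis and reading off the cardinality inequality.
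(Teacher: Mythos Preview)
Your proof is correct and is exactly the intended argument: the paper states this as an immediate corollary of the preceding lemma without providing a separate proof, and your reasoning---take a weak $k$-metric basis, apply the lemma, read off the cardinality inequality---is precisely the one-line deduction being invoked.
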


\section{Determining $\kappa$}

\label{sec:kappa}

Since computing $\operatorname*{wdim}_{k}(G)$ can be made in the corresponding
suitable set of values of $k$, our first task must be finding the value
$\kappa(G)$. To this end, for a pair of vertices $x,y\in V(G)$ and a set
$S\subseteq V(G),$ let us denote $\Delta_{S}(x,y)=\sum_{s\in S}\Delta
_{s}(x,y)=\sum_{s\in S}\left\vert d(x,s)-d(y,s)\right\vert $. Since the
contribution of each vertex $s\in S$ to the value of $\Delta_{S}(x,y)$ is
nonnegative, we conclude that for $S\subseteq S^{\prime}\subseteq V(G)$ it
holds that $\Delta_{S}(x,y)\leq\Delta_{S^{\prime}}(x,y).$ Thus, denoting
$\Delta(x,y)=\Delta_{V(G)}(x,y)$ we immediately obtain the following observation.

\begin{observation}
\label{Obs_vertexSet}For any graph $G$, $\kappa(G)=\displaystyle\min\left\{
\Delta(x,y)\,:\,x,y\in V(G)\right\}  $.
\end{observation}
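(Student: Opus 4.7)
The plan is to establish the equality by proving both inequalities, exploiting the monotonicity of $\Delta_S(x,y)$ in $S$ that was just noted before the statement.

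For the upper bound $\kappa(G) \le \min\{\Delta(x,y) : x,y \in V(G)\}$ (over distinct pairs), I would argue as follows. By definition of $\kappa(G)$, there exists a weak $\kappa(G)$-resolving set $S \subseteq V(G)$, which means $\Delta_S(x,y) \ge \kappa(G)$ for every pair of distinct vertices $x,y$. Since $S \subseteq V(G)$, the monotonicity observation gives $\Delta_S(x,y) \le \Delta_{V(G)}(x,y) = \Delta(x,y)$. Chaining these yields $\Delta(x,y) \ge \kappa(G)$ for every distinct $x,y$, and hence the minimum over such pairs is at least $\kappa(G)$.

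For the lower bound, I would simply take $S = V(G)$ and set $k = \min\{\Delta(x,y) : x,y \in V(G)\}$ (the minimum being over distinct pairs, so $k \ge 1$ whenever the graph has more than one vertex and at least one resolving pair exists). Then by construction $\Delta_S(x,y) = \Delta(x,y) \ge k$ for every pair, so $V(G)$ itself is a weak $k$-resolving set. Therefore $\kappa(G) \ge k$, as desired.

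Combining both inequalities gives the claimed equality. I expect no genuine obstacle here; the statement is essentially a direct consequence of the monotonicity $\Delta_S(x,y) \le \Delta_{S'}(x,y)$ for $S \subseteq S'$, which has just been noted, together with the fact that taking $S$ to be the whole vertex set is always admissible. The only subtlety worth mentioning is the convention that pairs are understood to be distinct (otherwise $\Delta(x,x)=0$ would trivially force $\kappa(G)=0$), but this is already implicit throughout the definitions.
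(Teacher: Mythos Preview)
Your proof is correct and is precisely the argument the paper has in mind: the observation is stated without a separate proof, as an immediate consequence of the monotonicity $\Delta_S(x,y)\le\Delta_{S'}(x,y)$ for $S\subseteq S'$ noted just before it. Your remark about restricting to distinct pairs is also appropriate and matches the paper's implicit convention.
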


From the above observation, we notice that computing the value $\kappa(G)$ for
a given graph $G$ can be polynomially done.\ However, an explicit formula for
it seems to be hard to describe. We next give some special cases, and
particularize some situations for graphs under some particular situations, or
for some special families of graphs.

Since the entire vertex set of a graph is indeed a weak $k$-resolving set for
some $k\geq2$, it is clear that any nontrivial graph $G$ satisfies that
$\kappa(G)\geq2$, or equivalently that any graph is at least weak $2$-metric
dimensional. In this sense, it is worthy of characterizing those graphs that
are weak $2$-metric dimensional.

\begin{proposition}
\label{prop:2-met-dim-al} A nontrivial graph $G$ is weak $2$-metric
dimensional if and only if $G$ has true twins.
\end{proposition}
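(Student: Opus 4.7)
The plan is to prove both directions directly from Observation~\ref{Obs_vertexSet}, using the key arithmetic inequality $\Delta(x,y) \geq 2\, d(x,y)$ obtained by isolating the contributions of $x$ and $y$ themselves in the sum.

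For the sufficiency direction, I would take true twins $x,y$ (so $N[x]=N[y]$, in particular $d(x,y)=1$) and compute $\Delta(x,y)$ vertex by vertex: the terms $s=x$ and $s=y$ each contribute $1$, and for any $s \notin \{x,y\}$ a shortest $x$–$s$ path has its second vertex in $N(x)$, which equals $N(y) \setminus \{x\} \cup \{y\}$, so the same length is available as a $y$–$s$ walk; symmetry gives $d(x,s)=d(y,s)$, contributing $0$. Hence $\Delta(x,y)=2$, so by Observation~\ref{Obs_vertexSet} we have $\kappa(G)\leq 2$, and combined with the general lower bound $\kappa(G)\geq 2$ this yields $\kappa(G)=2$.

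For the necessity direction, assume $\kappa(G)=2$ and pick a minimizing pair $x,y$ with $\Delta(x,y)=2$. First I would observe that the terms $s=x$ and $s=y$ each contribute exactly $d(x,y)$ to $\Delta(x,y)$, so $\Delta(x,y)\geq 2d(x,y)$, forcing $d(x,y)=1$, that is, $x$ and $y$ are adjacent. Since all terms of the sum are nonnegative and the contributions from $s\in\{x,y\}$ already exhaust the value $2$, every remaining term must vanish: $d(x,s)=d(y,s)$ for all $s\in V(G)\setminus\{x,y\}$. In particular, for any $w\notin\{x,y\}$ we have $w\in N(x) \iff d(x,w)=1 \iff d(y,w)=1 \iff w\in N(y)$. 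Combined with $xy\in E(G)$, this gives $N[x]=N[y]$, so $x,y$ are true twins.

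There is no real obstacle here; the only subtle step is the simple but crucial identity $\Delta(x,y)\geq 2d(x,y)$, which turns the global sum condition into the distance-one conclusion and thereby pins the relationship down to the twin neighborhoods. The rest is a direct unwinding of the definitions of distance difference and of true twins.
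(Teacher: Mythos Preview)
Your proof is correct and follows essentially the same idea as the paper's. The sufficiency direction is identical; for necessity, the paper argues by contrapositive (assuming no true twins and splitting into the cases of false twins versus non-twins to get $\Delta(x,y)\geq 3$), whereas you argue directly by taking a minimizing pair with $\Delta(x,y)=2$, using $\Delta(x,y)\geq 2d(x,y)$ to force adjacency, and then reading off $N[x]=N[y]$ from the vanishing of all remaining terms---this is a slightly cleaner packaging of the same computation, avoiding the separate treatment of false twins.
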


\begin{proof}
If $G$ has true twins $x,y$, then clearly $\Delta_{s}(x,y)=1$ for
$s\in\{x,y\},$ and $\Delta_{s}(x,y)=0$ for every $s\in V(G)\backslash\{x,y\}.$
Thus $\kappa(G)=\displaystyle\min\left\{  \Delta(x,y)\,:\,x,y\in V(G)\right\}
=2$.

On the other hand, assume $G$ is weak $2$-metric dimensional. Suppose that $G$
has no true twins and let $x,y\in V(G)$. If $x,y$ are false twins, then
$\Delta_{s}(x,y)=2$ for $s\in\{x,y\},$ and $\Delta_{s}(x,y)=0$ for every $s\in
V(G)\backslash\{x,y\},$ thus $\Delta(x,y)=4$. Moreover, if $x,y$ are not false
twins, then $N_{G}(x)\neq N_{G}(y)$ and so there exists a vertex
$s\not \in \{x,y\}$ such that $\Delta_{s}(x,y)>0,$ which means that
$\Delta(x,y)\geq3$. As a consequence, we deduce that $\kappa
(G)=\displaystyle\min\left\{  \Delta(x,y)\,:\,x,y\in V(G)\right\}  \geq3$, a
contradiction. Therefore, $G$ must have true twins.
\end{proof}

Since the graphs that are weak $2$-metric dimensional are easily described, we
may consider some larger values for $\kappa(G)$.

\begin{proposition}
\label{prop:no-3-dim-al}A graph $G$ is weak $3$-metric dimensional if and only if

\begin{itemize}
\item[(i)] $G$ has no true twins; and

\item[(ii)] there exists a pair of adjacent vertices $x$ and $y$ in $G$ such
that $N_{G}[x]\backslash\{z\}=N_{G}[y]$ for some vertex $z$ and all neighbors
of $z$ belong to $N_{G}[N_{G}[x]\cap N_{G}[y]].$
\end{itemize}
\end{proposition}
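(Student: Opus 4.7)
The plan is to reduce the biconditional to an analysis of which pairs $x,y$ realize $\Delta(x,y)=3$. By Observation \ref{Obs_vertexSet}, $\kappa(G)=\min_{x\neq y}\Delta(x,y)$, and by Proposition \ref{prop:2-met-dim-al} the case $\kappa(G)=2$ corresponds exactly to the presence of true twins. Hence $G$ is weak $3$-metric dimensional if and only if (i) holds and some pair attains $\Delta(x,y)=3$, so it suffices to show, assuming (i), that such a pair exists if and only if (ii) holds.

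I would first extract the combinatorial shape forced by $\Delta(x,y)=3$. The terms $s=x$ and $s=y$ together contribute $2d(x,y)$, so $d(x,y)=1$; with $x,y$ adjacent, the triangle inequality gives $\Delta_s(x,y)\in\{0,1\}$ for every $s$, and each $s\in N_G[x]\triangle N_G[y]$ contributes exactly $1$. Thus
\[
\Delta(x,y)=2+|N_G[x]\triangle N_G[y]|+\sum_{s\notin N_G[x]\cup N_G[y]}\Delta_s(x,y),
\]
so $|N_G[x]\triangle N_G[y]|\leq 1$; the empty case would make $x,y$ true twins (excluded by (i)), so after relabeling $N_G[x]\setminus\{z\}=N_G[y]$ for a unique $z$. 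Since $x\in N_G(y)\cap N_G(z)$ we get $d(y,z)=2$ and $\Delta_z(x,y)=1$, and $\Delta(x,y)=3$ is then equivalent to $d(x,s)=d(y,s)$ for every $s\notin\{x,y,z\}$.

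The heart of the argument is to show that, given $N_G[x]=N_G[y]\cup\{z\}$, this distance-equality condition is equivalent to $N_G(z)\subseteq N_G[N_G[y]]$ (we use $N_G[x]\cap N_G[y]=N_G[y]$). For the forward direction, pick $w\in N_G(z)$: if $w=x$ then $w\in N_G[y]$ trivially, while if $w\neq x$ then $w\notin\{x,y,z\}$ (the value $w=y$ is excluded since $d(y,z)=2$), and the distance equality at $s=w$ combined with $d(x,w)\leq 2$ via $z$ yields $d(x,w)=d(y,w)\in\{1,2\}$; inspecting the two values either places $w$ in $N_G[y]$ or produces a common neighbor of $w$ and $y$, so $w\in N_G[N_G[y]]$. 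For the converse, fix $s\notin\{x,y,z\}$ and take a shortest $z$-$s$ path $z,u_1,\ldots,u_k=s$; condition (ii) puts $u_1$ in $N_G[N_G[y]]$, and a case split on whether $u_1\in N_G[y]$ or $u_1$ is merely adjacent to some $v\in N_G(y)\setminus\{x\}$ produces a witness $v\in N_G(y)\setminus\{x\}$ with $d(v,s)\leq k$. Comparing the recursions
\[
d(x,s)=1+\min\{d(y,s),d(z,s),m(s)\},\qquad d(y,s)=1+\min\{d(x,s),m(s)\},
\]
with $m(s):=\min_{v\in N_G(y)\setminus\{x\}}d(v,s)\leq d(z,s)$, then forces $d(x,s)=d(y,s)=m(s)+1$.

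I expect the main obstacle to be the sub-case $u_1=x$ of the converse: here one must rule out $u_2=y$, since otherwise any $u_3\in N_G(y)\setminus\{x\}\subseteq N_G(x)$ would yield a $z$-$s$ path $z,x,u_3,\ldots$ of length $k-1$ contradicting $d(z,s)=k$; this forces $u_2\in N_G(y)\setminus\{x\}$, which supplies the witness $v=u_2$. A brief sanity check covers the degenerate configuration $N_G(y)=\{x\}$: condition (ii) then forces $G=P_3$, where the distance-equality is vacuous and $\kappa(G)=3$ is verified directly.
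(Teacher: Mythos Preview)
Your proof is correct and follows essentially the same route as the paper: reduce via Observation~\ref{Obs_vertexSet} and Proposition~\ref{prop:2-met-dim-al} to characterizing pairs with $\Delta(x,y)=3$, force $d(x,y)=1$ and $|N_G[x]\triangle N_G[y]|=1$, and then relate the residual condition to the neighborhood of $z$. The one substantive difference is that the paper's sufficiency direction simply asserts that, under (i) and (ii), ``$x$ and $y$ are distinguished only by $s\in\{x,y,z\}$'' without further justification, whereas you supply a genuine argument for this via the witness $v\in N_G(y)\setminus\{x\}$ with $d(v,s)\le d(z,s)$ and the neighbor recursions for $d(x,s)$ and $d(y,s)$; your treatment of the sub-case $u_1=x$ and the degenerate case $N_G(y)=\{x\}$ (forcing $G=P_3$) is careful and necessary, so your write-up is in fact more complete than the paper's on this point.
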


\begin{proof}
If $G$ has no true twins then Proposition \ref{prop:2-met-dim-al} implies
$\kappa(G)\geq3.$ If the condition (ii) also holds, then $x$ and $y$ are
distinguished only by $s\in\{x,y,z\}$ where $\Delta_{s}(x,y)=1$ for each such
$s.$ Notice that $z$ must be neighbor of $x.$ Hence, we conclude that
$\Delta(x,y)=3,$ so $G$ is weak $3$-metric dimensional.

Now assume that $G$ is weak $3$-metric dimensional. If $G$ has true twins,
then $G$ is weak $2$-metric dimensional by Proposition \ref{prop:2-met-dim-al}%
, and since $G$ is weak $3$-dimensional the condition (i) must hold. Also,
there must exist a pair of vertices $x,y$ in $G$ such that $\Delta(x,y)=3.$
Notice that $x$ and $y$ are adjacent, otherwise for each $s\in\{x,y\}$ it
would hold $\Delta_{s}(x,y)=2,$ which further implies $\Delta(x,y)\geq4,$ a contradiction.

As $x$ and $y$ are not true twins, we may assume that there is a vertex $z\in
V(G)$ which is adjacent to $x$ and not adjacent to $y.$ Notice that if there
are two such vertices, say $z$ and $z^{\prime},$ then the distance difference
of $x$ and $y$ would be at least $4.$ Similarly, if there is a neighbor
$z^{\prime\prime}$ of $y$ that is not adjacent to $x$ then again this
difference is at least $4.$ So, we obtain $N_{G}[x]\backslash\{z\}=N_{G}[y].$

Next, let $u$ be any neighbor of $z$ distinct from $x.$ Observe that $u$ is on
a same distance to $y$ as to $x,$ as otherwise we increase $\Delta(x,y).$ So,
$d(u,y)\in\{1,2\}.$ If $d(u,y)=1,$ then $u\in N_{G}[x]\cap N_{G}[y].$ And if
$d(u,y)=2,$ then the common neighbor of $u$ and $y$ must be adjacent to $x,$
as mentioned above, otherwise $\Delta(x,y)\geq4.$ So, in this case $u\in
N_{G}[N_{G}[x]\cap N_{G}[y]].$
\end{proof}

Notice that the weak $k$-metric dimension of a graph with false twins and
without true twins behaves somehow different to Proposition
\ref{prop:2-met-dim-al}. By taking into account that if $x,y$ are false twins,
then clearly $\Delta_{s}(x,y)>0$ only for $s\in\{x,y\}$, but in this case
$\Delta(x,y)=4$. Thus, also in view of Proposition \ref{prop:no-3-dim-al}, it
follows that $\kappa(G)=\displaystyle\min\left\{  \Delta(x,y)\,:\,x,y\in
V(G)\right\}  =4$. This is summarized in the following corollary.

\begin{corollary}
\label{prop:false-twins} If $G$ has at least two false twins, no true twins
and it does not satisfy the statements of Proposition \ref{prop:no-3-dim-al},
then $G$ is weak $4$-metric dimensional.
\end{corollary}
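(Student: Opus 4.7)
The plan is to establish two matching inequalities for $\kappa(G)$, using the characterizations already proved in Propositions \ref{prop:2-met-dim-al} and \ref{prop:no-3-dim-al}, and then computing $\Delta(x,y)$ explicitly for the false twin pair.

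First I would show $\kappa(G)\geq 4$. Since $G$ has no true twins, Proposition \ref{prop:2-met-dim-al} gives $\kappa(G)\geq 3$, so $G$ is not weak $2$-metric dimensional. Next, since condition (i) of Proposition \ref{prop:no-3-dim-al} is satisfied (no true twins) but, by hypothesis, $G$ does not satisfy the full statement, condition (ii) must fail. Therefore $G$ is not weak $3$-metric dimensional either, which by definition of $\kappa$ forces $\kappa(G)\geq 4$.

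For the matching upper bound $\kappa(G)\leq 4$, I would take a pair of false twins $x,y\in V(G)$ (which exists by assumption) and compute $\Delta(x,y)$. The key observation is that false twins are necessarily non-adjacent: if $x,y$ were adjacent, $y\in N_G(x)=N_G(y)$ would give a loop at $y$, contradicting that $G$ is simple. In particular, $d_G(x,y)=2$ (any common neighbor provides a path of length $2$). Hence $\Delta_x(x,y)=|0-2|=2$ and $\Delta_y(x,y)=|2-0|=2$. For any other vertex $s\notin\{x,y\}$, the identity $N_G(x)=N_G(y)$ implies $d_G(x,s)=d_G(y,s)$ (a shortest $x$--$s$ walk passes through some neighbor of $x=$ neighbor of $y$, and vice versa), so $\Delta_s(x,y)=0$. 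Summing gives $\Delta(x,y)=4$, and by Observation \ref{Obs_vertexSet} this yields $\kappa(G)\leq\Delta(x,y)=4$.

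Combining both inequalities gives $\kappa(G)=4$, i.e., $G$ is weak $4$-metric dimensional. There is no real obstacle here: the heart of the argument is the distance computation for false twins, and the rest is bookkeeping with the two preceding characterizations. The only subtle point worth spelling out is the non-adjacency of false twins in simple loopless graphs, which is what makes $\Delta(x,y)$ equal to $4$ rather than something smaller.
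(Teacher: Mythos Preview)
Your proposal is correct and follows essentially the same route as the paper: the upper bound $\kappa(G)\le 4$ comes from computing $\Delta(x,y)=4$ for a pair of false twins (exactly as in the paragraph preceding the corollary and in the proof of Proposition~\ref{prop:2-met-dim-al}), and the lower bound $\kappa(G)\ge 4$ is obtained by invoking Propositions~\ref{prop:2-met-dim-al} and~\ref{prop:no-3-dim-al} to rule out $\kappa(G)\in\{2,3\}$. Your write-up is simply more explicit about the non-adjacency of false twins and the logical unpacking of the hypotheses, which is fine.
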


\paragraph{Basic graph classes.}

We continue this section by finding $\kappa(G)$ for some simple graphs such as
paths, stars, cycles, complete graphs and complete bipartite graphs on $n$
vertices, denoted by $P_{n},$ $S_{n},$ $C_{n},$ $K_{n}$ and $K_{r,s},$
respectively, where $r+s=n.$ We also find $\kappa(G)$ for some specific graph
classes such as trees and grid graphs. In order to do that, recall that
$\kappa(G)=\min\{\Delta(x,y)\,:\,x,y\in V(G)\}$ according to Observation
\ref{Obs_vertexSet}. Therefore, in order to establish the maximum
$k$-dimensionality for the graph classes we consider, we take $S=V(G).$

\begin{proposition}
The following statements hold:

\begin{itemize}
\item[\textrm{(i)}] A complete graph $K_{n},$ for $n\geq2,$ is weak $2$-metric dimensional.

\item[\textrm{(ii)}] A star $S_{n},$ for $n\geq4$, is weak $4$-metric dimensional.

\item[\textrm{(iii)}] A complete bipartite graph $K_{q,r},$ for $q,r\geq2$, is
weak $4$-metric dimensional.

\item[(iv)] A path $P_{n}$, for $n\geq2,$ is weak $n$-metric dimensional.

\item[\textrm{(v)}] A cycle $C_{n}$ is weak $(n-1)$-metric dimensional for odd
$n\geq3,$ and weak $n$-metric dimensional for even $n\geq4.$
\end{itemize}
\end{proposition}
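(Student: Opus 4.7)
By Observation~\ref{Obs_vertexSet}, $\kappa(G)=\min\{\Delta(x,y):x,y\in V(G)\}$, so in every case the plan is to compute this minimum. For (i), any two vertices of $K_{n}$ are true twins, and Proposition~\ref{prop:2-met-dim-al} immediately yields $\kappa(K_{n})=2$. For (ii) and (iii), I would single out pairs of false twins---two leaves in $S_{n}$, or two vertices inside the same part of $K_{q,r}$---and recall, as in the discussion preceding Corollary~\ref{prop:false-twins}, that each such pair contributes $\Delta(x,y)=4$. It then remains to check the remaining pairs (center versus leaf in $S_{n}$, a cross-part pair in $K_{q,r}$); a direct inspection shows that every vertex of $V(G)$ contributes exactly $1$ to $\Delta(x,y)$, giving $\Delta(x,y)=n\geq 4$, and hence the overall minimum is $4$.

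For (iv), write $P_{n}=v_{1}v_{2}\cdots v_{n}$ with $d(v_{i},v_{j})=|i-j|$. For an adjacent pair $v_{i},v_{i+1}$, every $j$ satisfies $\bigl||i-j|-|i+1-j|\bigr|=1$, so $\Delta(v_{i},v_{i+1})=n$. For a non-adjacent pair $v_{i},v_{k}$ with $d=k-i\geq 2$, I would split the sum according to the position of $j$: indices with $j\leq i$ or $j\geq k$ each contribute $d$, while the $d-1$ interior indices $i<j<k$ contribute $|2j-i-k|$. A short symmetric-sum evaluation then shows that the total strictly exceeds $n$, so adjacent pairs realize the minimum and $\kappa(P_{n})=n$.

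For (v), label the cycle $u_{0}u_{1}\cdots u_{n-1}$ and begin with an adjacent pair, say $u_{0},u_{1}$. Each vertex $u_{w}$ yields $\Delta_{u_{w}}(u_{0},u_{1})\in\{0,1\}$ because $u_{0}u_{1}$ is an edge, and the value equals $0$ iff $u_{w}$ is equidistant from $u_{0}$ and $u_{1}$. Such a vertex exists in $C_{n}$ only when $n$ is odd, and is then the unique vertex antipodal to the midpoint of the edge $u_{0}u_{1}$. This immediately gives $\Delta(u_{0},u_{1})=n$ for even $n$ and $\Delta(u_{0},u_{1})=n-1$ for odd $n$. For a non-adjacent pair $u_{i},u_{j}$ at cyclic distance $d\geq 2$, I would split the sum along the two arcs between $u_{i}$ and $u_{j}$ (of lengths $d$ and $n-d$) and show that the total is at least the adjacent-pair value; it need not strictly exceed it (for example $C_{4}$ attains equality on a pair at distance $2$), but this suffices to pin down $\kappa(C_{n})$.

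The main obstacle I foresee is this non-adjacent case in (v): one has to track up to two ``midpoint'' vertices which may contribute $0$, depending on the parities of $n$ and $d$, and argue that the per-vertex contributions on the rest of the cycle are large enough to keep the sum at least as big as for an adjacent pair. This should decompose into a small number of parity subcases, each settled by the monotonicity of cyclic distance along each arc.
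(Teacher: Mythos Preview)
Your proposal is correct and follows the same overall structure as the paper: use Observation~\ref{Obs_vertexSet}, dispose of (i)--(iii) via twins, and for (iv)--(v) show that adjacent pairs realise the minimum of $\Delta(x,y)$.

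The one place where the paper's argument is noticeably slicker is the non-adjacent case in (iv) and (v). You plan an explicit positional or arc decomposition of the sum and anticipate a parity case split to handle the midpoint vertices in the cycle. The paper sidesteps all of this with a uniform bounding trick: for a pair $x,y$ at distance $d\geq 2$, there are at most one (path or odd cycle) or two (even cycle) vertices $s$ with $\Delta_{s}(x,y)=0$; the two vertices $s\in\{x,y\}$ each contribute $\Delta_{s}(x,y)=d(x,y)\geq 2$; and every remaining vertex contributes at least $1$. Summing these lower bounds gives $\Delta(x,y)\geq 0+2+2+(n-3)=n+1$ for paths and odd cycles, and $\Delta(x,y)\geq 0+0+2+2+(n-4)=n$ for even cycles. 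This avoids any exact evaluation, any arc split, and any parity subcases beyond the single odd/even distinction already forced by the statement. Your route would work, but the paper's bound-counting is the shortcut that removes the obstacle you flagged.
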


\begin{proof}
A complete graph $K_{n}$ contains a pair of true twins, thus (i) follows from
Proposition \ref{prop:2-met-dim-al}. A star $S_{n}$ and a complete bipartite
graph have a pair of false twins, no pair of true twins, so (ii) and (iii)
follow from Corollary \ref{prop:false-twins}. It remains to prove (iv) and
(v). We consider the set $S=V(G),$ since $\kappa(G)=\min\{\Delta
(x,y)\,:\,x,y\in V(G)\}.$

In order to establish (iv), notice that a pair of neighboring vertices $x,y\in
V(P_{n})$ and any vertex $s\in V(P_{n})$ satisfy $\Delta_{s}(x,y)=1,$ thus
$\Delta(x,y)=\left\vert V(P_{n})\right\vert =n.$ For a pair of vertices
$x,y\in V(P_{n})$ with $d(x,y)\geq2,$ there exists at most one vertex $s\in S$
such that $\Delta_{s}(x,y)=0,$ and if $s\in\{x,y\}$, then $\Delta
_{s}(x,y)=d(x,y)\geq2.$ Since any other vertex $s$ of $S$ satisfies
$\Delta_{s}(x,y)\geq1,$ we conclude that $\Delta(x,y)\geq0+2+2+(n-3)=n+1.$
Thus, $\kappa(P_{n})=\min\{\Delta(x,y)\,:\,x,y\in V(P_{n})\}=n.$

To prove (v), let us first assume that $C_{n}$ is an odd length cycle. For any
pair of neighboring vertices $x,y\in V(C_{n})$, there exists precisely one
$s\in S$ such that $\Delta_{s}(x,y)=0,$ and any other $s\in S$ satisfies
$\Delta_{s}(x,y)=1,$ thus $\Delta(x,y)=n-1.$ For any pair of vertices $x,y\in
V(G)$ at distance at least $2,$ there exists at most one vertex $s\in S$ such
that $\Delta_{s}(x,y)=0.$ On the other hand, for $s\in\{x,y\}$ it holds that
$\Delta_{s}(x,y)=d(x,y)\geq2.$ We conclude that $\Delta(x,y)\geq
0+2+2+(n-3)=n+1.$ Consequently, $\kappa(C_{n})=n-1.$

Let us next assume that $C_{n}$ is an even length cycle. Any pair of
neighboring vertices $x,y\in V(C_{n})$ satisfies $\Delta_{s}(x,y)=1$ for every
$s\in S,$ consequently $\Delta(x,y)=n.$ For any pair of vertices $x,y\in
V(C_{n})$ at distance at least $2$ there exist at most two vertices $s\in
S=V(C_{n})$ such that $\Delta_{s}(x,y)=0.$ In addition, if $s\in\{x,y\},$ then
$\Delta_{s}(x,y)=d(x,y)\geq2.$ Similarly as above, we conclude that
$\Delta(x,y)\geq0+0+2+2+(n-4)=n.$ Consequently, $\kappa(C_{n})=n.$
\end{proof}

\paragraph{Trees.}

In order to establish $\kappa(T)$ for a tree $T,$ we need to introduce several
notions. Given a tree $T$ and a vertex $v\in V(T)$ with $d_{T}(v)\geq3,$ a
\emph{thread} of length $l$ hanging at $v$ is any path $vv_{1}\cdots v_{l}$ in
$T$ such that $d_{T}(v_{i})=2$ for $i\in\lbrack l-1]$ and $d_{T}(v_{l})=1.$ A
vertex $v$ in $T$ with at least $\ell$$(v)\geq2$ threads hanging at it is
called a \emph{root vertex} of $T.$ The number $\ell(v)$ of threads hanging at
a root vertex $v$ is called the \emph{root degree} of $v.$ For a root vertex
$v\in V(T)$ with the root degree $\ell(v)\geq2,$ let $l_{1}(v)\leq
l_{2}(v)\leq\cdots\leq l_{\ell(v)}(v)$ denote the sequence of lengths of
threads hanging at $v.$ When the root vertex is clear from the context, we
will shortly write $\ell$ and $l_{i}.$

Notice that any tree $T\not =P_{n}$ contains at least one root vertex, thus
for such a tree we define
\[
\kappa^{\ast}(T)=\min\{2(l_{1}(v)+l_{2}(v)):v\in R(T)\}
\]
where $R(T)$ is the set of all root vertices of $T.$ When the tree $T$ is
clear from the context, we will shortly write $\kappa^{\ast}.$ For a tree $T$
which has precisely one root vertex $v$ with the root degree $\ell,$ we
introduce the notation $T=S^{\ell}(l_{1},\ldots,l_{\ell})$ where $l_{1}%
\leq\cdots\leq l_{\ell}$ are the lengths of $\ell$ threads hanging at $v.$ One
may consider that $T=S^{\ell}(l_{1},\ldots,l_{\ell})$ is obtained from the
star $S_{\ell+1}$ by edge subdivision.

\begin{lemma}
\label{Lemma_Riste}Let $T$ be a tree, let $x,y\in V(T)$ be two vertices at a
distance $d,$ let $P=u_{0}u_{1}\cdots u_{d}$ be the path connecting $x$ and
$y$ in $T,$ where $x=u_{0}$ and $y=u_{d},$ and let $T_{i}$ be the component of
$T-E(P)$ containing $u_{i}.$ If $S\subseteq V(T),$ then%
\[
\Delta_{S}(x,y)=\sum_{i=0}^{d}\left\vert d-2i\right\vert \left\vert
V(T_{i})\cap S\right\vert .
\]

\end{lemma}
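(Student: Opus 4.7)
The plan is to evaluate $\Delta_s(x,y)$ vertex by vertex and then sum, exploiting the fact that in a tree there is a unique path between any two vertices. Removing the $d$ edges of $P$ from $T$ disconnects $T$ into exactly $d+1$ components $T_0, T_1, \ldots, T_d$, giving a partition $V(T) = \bigsqcup_{i=0}^{d} V(T_i)$. So the sum defining $\Delta_S(x,y)$ can be split along this partition, and it suffices to show that every $s \in V(T_i) \cap S$ contributes exactly $|d - 2i|$.

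The key step is to fix $i \in \{0, 1, \ldots, d\}$ and an arbitrary $s \in V(T_i)$, and argue that $d(x, s) = i + d(u_i, s)$ and $d(y, s) = (d - i) + d(u_i, s)$. For this I would use the standard tree fact: the unique $x$--$s$ path must traverse the unique $u_i$--$s$ path (which lies in $T_i$) and then continue to $x = u_0$ along $P$, since $u_i$ is the only vertex of $T_i$ incident to edges of $P$; hence the $x$--$s$ path is the concatenation of the $x$--$u_i$ subpath of $P$ (of length $i$) with the $u_i$--$s$ path inside $T_i$. The analogous argument applies to $y = u_d$, yielding the length $(d-i) + d(u_i, s)$.

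Subtracting and taking absolute values, the $d(u_i, s)$ terms cancel, and so
\[
\Delta_s(x,y) = \bigl| i + d(u_i,s) - (d-i) - d(u_i,s) \bigr| = |2i - d| = |d - 2i|.
\]
Note that this also handles the two boundary cases $i = 0$ (so $s \in T_0$ and $\Delta_s(x,y) = d$) and $i = d$ (so $s \in T_d$ and $\Delta_s(x,y) = d$) uniformly.

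Finally, summing over $s \in S$ and grouping the summands by which component $T_i$ contains $s$ gives
\[
\Delta_S(x,y) = \sum_{i=0}^{d} \sum_{s \in V(T_i) \cap S} |d - 2i| = \sum_{i=0}^{d} |d - 2i| \cdot |V(T_i) \cap S|,
\]
as required. I do not anticipate any real obstacle: the only subtle point is justifying carefully that the $x$--$s$ geodesic decomposes as claimed, which is immediate from the uniqueness of paths in a tree together with the fact that $u_i$ is the unique vertex of $T_i$ lying on $P$.
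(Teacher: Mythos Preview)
Your proof is correct and follows essentially the same route as the paper: both arguments observe that for $s\in V(T_i)$ the geodesics from $s$ to $x$ and to $y$ pass through $u_i$, so the common term $d(u_i,s)$ cancels and $\Delta_s(x,y)=|i-(d-i)|=|d-2i|$, after which summing over the partition $V(T)=\bigsqcup_i V(T_i)$ gives the result. Your write-up is slightly more explicit about the decomposition $d(x,s)=i+d(u_i,s)$, but the underlying idea is identical.
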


\begin{proof}
Let $s\in V(T_{i})\cap S$ for some $i\in\{0,\ldots,d\}.$ Since the shortest
path from $s$ to both $x$ and $y$ leads through $u_{i},$ it holds that
$\Delta_{s}(x,y)=\left\vert d(s,x)-d(s,y)\right\vert =\left\vert
d(u_{i},x)-d(u_{i},y)\right\vert .$ Further, since $x=u_{0}$ and $y=u_{d},$ it
follows that
\[
\Delta_{s}(x,y)=\left\vert d(u_{i},u_{0})-d(u_{i},u_{d})\right\vert
=\left\vert i-(d-i)\right\vert =\left\vert d-2i\right\vert .
\]
Taking the sum of $\Delta_{s}(x,y),$ where $s$ goes through $V(T_{i})$ and $i$
through $\{0,\ldots,d\}$, yields the result.
\end{proof}

\begin{theorem}
Let $T\not =P_{n}$ be a tree on $n$ vertices. Then $\kappa(T)=\min
\{n,\kappa^{\ast}(T)\}.$ Particularly, if $T\not =S^{3}(l_{1},l_{2},l_{3}),$
then $\kappa(T)=\kappa^{\ast}(T).$
\end{theorem}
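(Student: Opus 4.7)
By Observation~\ref{Obs_vertexSet}, $\kappa(T)=\min_{x,y\in V(T)}\Delta(x,y)$, and Lemma~\ref{Lemma_Riste} rewrites $\Delta(x,y)=\sum_{i=0}^{d}|d-2i|\,m_i$ where $d=d_T(x,y)$, the $x$-$y$ path is $u_0\cdots u_d$, and $m_i=|V(T_i)|$ with $\sum_i m_i=n$. The plan is to establish $\kappa(T)=\min\{n,\kappa^*(T)\}$ by matching upper and lower bounds, and then to derive the \emph{particularly} clause by showing $\kappa^*(T)\leq n$ for every $T\neq S^3(l_1,l_2,l_3)$.

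\emph{Upper bound.} Any edge $xy$ satisfies $|d(x,s)-d(y,s)|=1$ for every $s\in V(T)$, hence $\Delta(x,y)=n$ and $\kappa(T)\leq n$. For each root vertex $v$ with two shortest threads of lengths $l_1\leq l_2$, let $x,y$ be the respective neighbors of $v$ on these threads. Then $d=2$, $|T_0|=l_1$, $|T_2|=l_2$, $|T_1|=n-l_1-l_2$, so Lemma~\ref{Lemma_Riste} yields $\Delta(x,y)=2(l_1+l_2)$; minimizing over root vertices gives $\kappa(T)\leq\kappa^*(T)$, and together $\kappa(T)\leq\min\{n,\kappa^*(T)\}$.

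\emph{Lower bound.} We show $\Delta(x,y)\geq\min\{n,\kappa^*(T)\}$ for every pair. Adjacent pairs give $\Delta=n$. For $d\geq 2$ we split by parity. If $d$ is odd (hence $d\geq 3$), every $|d-2i|$ is odd and at least $1$, with $|d-2i|-1\geq 2$ outside the two central indices; writing $\Delta(x,y)=n+\sum_i(|d-2i|-1)m_i$ and using $m_0,m_d\geq 1$ gives $\Delta(x,y)\geq n+4$. If $d$ is even, then $|d-2i|\geq 2$ for $i\neq d/2$ yields $\Delta(x,y)\geq 2(n-m_{d/2})\geq n$ whenever $m_{d/2}\leq n/2$. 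In the remaining subcase ($d$ even, $m_{d/2}>n/2$), a large subtree hangs at the central vertex $u_{d/2}$; tracing the path from $u_{d/2}$ toward $x$ and toward $y$---with every non-central interior branch contributing at least $2$ to $\Delta$---one identifies a root vertex (either $u_{d/2}$ itself when all interior $u_i$ for $i\neq d/2$ have degree $2$, or else the first interior position with an off-path branch, which carries a path-thread plus either an off-path thread or a deeper root vertex in its branch) satisfying $2(l_1+l_2)\leq D(d)\leq\Delta(x,y)$, where $D(d)=\sum_{i=0}^{d}|d-2i|\geq 2d$. The main technical obstacle is this last subcase, namely the bookkeeping needed when several interior path positions simultaneously carry branches.

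\emph{Particularly.} Assume $T\neq S^3(l_1,l_2,l_3)$. If $T$ has two distinct root vertices $v,v'$, then the vertex sets of threads at $v$ and at $v'$ (excluding the roots themselves) are disjoint: the unique path from any interior thread vertex through degree-$2$ vertices identifies ``its root'' uniquely, so a vertex lying on threads at both $v$ and $v'$ would force $v=v'$. Consequently $l_1(v)+l_2(v)+l_1(v')+l_2(v')\leq n-2$, so at least one of the two sums is at most $(n-2)/2$, giving $\kappa^*(T)\leq n-2<n$. If $T$ has exactly one root vertex $v$, then every branch at $v$ is a thread (since a non-thread branch would, by the same deepest-branching argument, contain another root vertex), so $T=S^\ell(l_1,\ldots,l_\ell)$ with $\ell\geq 3$; since $\ell=3$ is excluded, $\ell\geq 4$, and then $l_3+l_4\geq l_1+l_2$ together with $1+\sum_{i=1}^{\ell}l_i=n$ yields $2(l_1+l_2)\leq n-1$. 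In either case $\kappa^*(T)\leq n$, so $\kappa(T)=\kappa^*(T)$.
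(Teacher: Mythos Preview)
Your upper bound and your ``particularly'' clause are correct and essentially identical to the paper's. Your lower bound for odd $d$, and for even $d$ with $m_{d/2}\le n/2$, is also fine (the latter is in fact a different and valid route to $\Delta(x,y)\ge n\ge\min\{n,\kappa^*\}$, while the paper instead argues $\Delta(x,y)\ge\kappa^*$ uniformly over all even-$d$ pairs).

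The genuine gap is in your ``remaining subcase'' ($d$ even, $m_{d/2}>n/2$). The chain $2(l_1+l_2)\le D(d)\le\Delta(x,y)$ that you assert does not hold. Take $T=S^4(10,10,10,10)$, so $n=41$ and $\kappa^*(T)=40$, and let $x=v_3^{(1)}$, $y=v_3^{(2)}$. Then $d=6$, the central vertex $u_3$ is the unique root $v$, and $m_3=21>n/2$; moreover every interior $u_i$ with $i\ne 3$ has degree~$2$, so by your case description the identified root is $u_{d/2}=v$. But $D(6)=6+4+2+0+2+4+6=24$, while $2(l_1(v)+l_2(v))=40$, so $2(l_1+l_2)\le D(d)$ is false. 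The true value $\Delta(x,y)=108$ comfortably exceeds $\kappa^*$, but your bound via $D(d)$ cannot see this, because replacing each $m_i$ by $1$ discards exactly the mass (here $m_0=m_d=8$) that carries the two long threads beyond the path endpoints.

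The paper avoids this difficulty by a different organizing idea: rather than splitting on the size of $m_{d/2}$, it lets $i^*\le d/2$ be the smallest index with a root vertex in $T_{i^*}$ and observes that either $\bigcup_{i\le i^*}V(T_i)$ (when $i^*<d/2$) or $\bigcup_{i\ne d/2}V(T_i)$ (when $i^*=d/2$) contains the full vertex sets of two threads at some root, hence has at least $l_1+l_2$ vertices; since every such $i$ has weight $|d-2i|\ge 2$, this gives $\Delta(x,y)\ge 2(l_1+l_2)\ge\kappa^*(T)$ directly. In your counterexample this is exactly the step that captures the $8$ vertices in $T_0$ and in $T_d$. Replacing your $D(d)$ bound by this argument (i.e.\ keeping the actual $m_i$'s in $\sum_{i\ne d/2}|d-2i|\,m_i\ge 2\sum_{i\ne d/2}m_i\ge 2(l_1+l_2)$) closes the gap.
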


\begin{proof}
Recall that $\kappa(T)=\min\{\Delta(x,y)\,:\,x,y\in V(T)\}.$ Let us first
establish that $\kappa(T)\leq\min\{n,\kappa^{\ast}(T)\}.$ For that purpose,
let $v$ be a root vertex of $T$ for which the minimum $\kappa^{\ast}(T)$ is
attained and let $P_{1}$ and $P_{2}$ be two shortest threads hanging at $v.$
Let $x$ and $y$ be vertices of $P_{1}$ and $P_{2},$ respectively, which are
adjacent to $v.$ Notice that $\Delta_{s}(x,y)=2$ for every $s\in V(P_{1})\cup
V(P_{2})\backslash\{v\},$ and $\Delta_{s}(x,y)=0$ for any other $s\in V(T).$
Consequently, we have $\Delta(x,y)=2(l_{1}(v)+l_{2}(v))=\kappa^{\ast}(T).$
Next, we consider any pair of neighboring vertices $x,y\in V(T)$, where
$\Delta_{s}(x,y)=1$ for any $s\in V(T).$ Consequently, we have $\Delta
(x,y)=\left\vert V(T)\right\vert =n.$ This establishes that $\kappa(T)\leq
\min\{n,\kappa^{\ast}(T)\}.$

To prove that $\kappa(T)\geq\min\{n,\kappa^{\ast}(T)\},$ we need to consider
all the remaining pairs of vertices $x,y\in V(T)$ and show that $\Delta
(x,y)\geq\min\{n,\kappa^{\ast}\}$ for all of them. For that purpose, let
$x,y\in V(T)$ be any pair of vertices at distance $d\geq2,$ let $P=u_{0}%
u_{1}\cdots u_{d}$ be the path connecting $x=u_{0}$ and $y=u_{d}$ in $T,$ and
let $T_{i}$ be the component of $T-E(P)$ which contains $u_{i}.$ Lemma
\ref{Lemma_Riste} implies%
\[
\Delta(x,y)=\sum_{i=0}^{d}\left\vert d-2i\right\vert \left\vert V(T_{i}%
)\right\vert .
\]
If $d$ is odd, then $\left\vert d-2i\right\vert \geq1$ for every
$i\in\{0,\ldots,d\},$ so $\Delta(x,y)\geq\sum_{i=0}^{d}\left\vert
V(T_{i})\right\vert =n.$ Hence, let us assume that $d$ is even. Notice that
$\left\vert d-2i\right\vert =0$ if and only if $i=d/2,$ and every integer
$i\not =d/2$ satisfies $\left\vert d-2i\right\vert \geq2.$ Let $i^{\ast}%
\in\{0,\ldots,d\}$ be the smallest integer such that $V(T_{i})$ contains a
root vertex $v$ of $T.$ We may assume that vertices $u_{i}$ of the path $P$
are denoted so that $i^{\ast}\leq d/2.$

If $i^{\ast}<d/2,$ then the set $\bigcup_{i=0}^{i^{\ast}}V(T_{i})$ contains
vertices of at least two threads hanging at a same root vertex of $T.$ Hence,
\[
\sum_{i=0}^{i^{\ast}}\left\vert V(T_{i})\right\vert \geq\min\{l_{1}%
(v)+l_{2}(v):v\in R(T)\},
\]
so we have%
\[
\Delta(x,y)\geq\sum_{i=0}^{i^{\ast}}\left\vert d-2i\right\vert \left\vert
V(T_{i})\right\vert \geq2\min\{l_{1}(v)+l_{2}(v):v\in R(T)\}=\kappa^{\ast
}(T).
\]
If $i^{\ast}=d/2,$ then $u_{d/2}$ is a root vertex and $x$ and $y$ belong to
two distinct threads hanging at $u_{d/2}.$ If we denote $I=\{0,\ldots
,d\}\backslash\{d/2\},$ then the set $\bigcup_{i\in I}V(T_{i})$ contains all
the vertices of the two threads hanging at the root vertex $u_{d/2}$ which
contain $x$ and $y,$ except $u_{d/2}$ itself, hence
\[
\sum_{i\in I}\left\vert V(T_{i})\right\vert \geq\min\{l_{1}(v)+l_{2}(v):v\in
R(T)\}.
\]
and consequently
\[
\Delta(x,y)\geq\sum_{i\in I}\left\vert d-2i\right\vert \left\vert
V(T_{i})\right\vert \geq2\min\{l_{1}(v)+l_{2}(v):v\in R(T)\}=\kappa^{\ast
}(T).
\]
Hence, we have established that $\Delta(x,y)\geq\min\{n,\kappa^{\ast}(T)\}$
for all $x,y\in V(T),$ which implies $\kappa(T)\geq\min\{n,\kappa^{\ast}\}.$
We conclude that $\kappa(T)=\min\{n,\kappa^{\ast}(T)\}$ for every
$T\not =P_{n}.$

It remains to prove that $\kappa(T)=\kappa^{\ast}(T)$ for $T\not =P_{n}$ and
$T\not =S^{3}(l_{1},l_{2},l_{3}).$ It is sufficient to establish that for such
trees $T$ the inequality $n\geq\kappa^{\ast}(T)$ holds. Assume first that $T$
has precisely one root vertex. Then $T=S^{\ell}(l_{1},\ldots,l_{\ell})$ for
$\ell\geq4,$ in which case $n\geq l_{1}+\cdots+l_{\ell}+1\geq2(l_{1}%
+l_{2})+1>\kappa^{\ast}(T).$ Assume next that $T$ has at least two root
vertices, say $v$ and $w.$ Then $n\geq l_{1}(v)+l_{2}(v)+l_{1}(w)+l_{2}%
(w)+2>\kappa^{\ast}(T),$ and we are done.
\end{proof}

\begin{figure}[h]
\begin{center}%
\begin{tabular}
[c]{cc}%
\begin{tabular}
[t]{ll}%
a) & \raisebox{-0.9\height}{\includegraphics[scale=0.6]{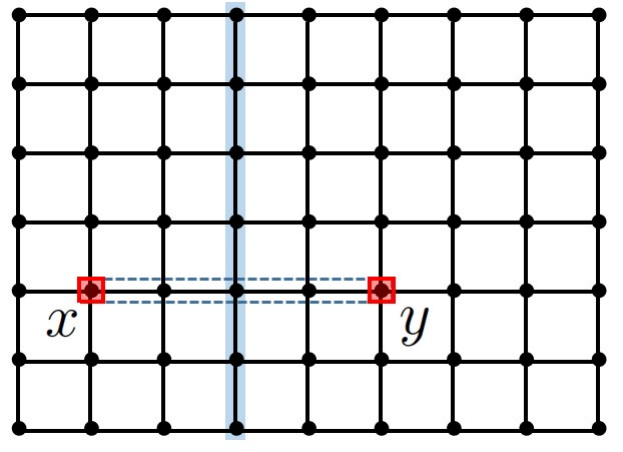}}
\end{tabular}
&
\begin{tabular}
[t]{ll}%
b) & \raisebox{-0.9\height}{\includegraphics[scale=0.6]{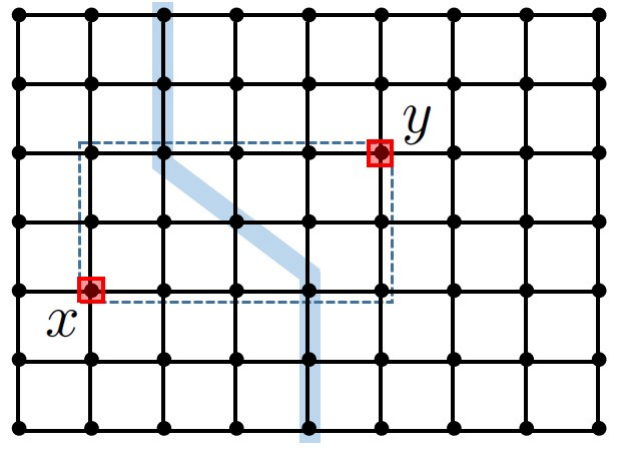}}
\end{tabular}
\medskip\\
\multicolumn{2}{c}{%
\begin{tabular}
[t]{ll}%
c) & \raisebox{-0.9\height}{\includegraphics[scale=0.6]{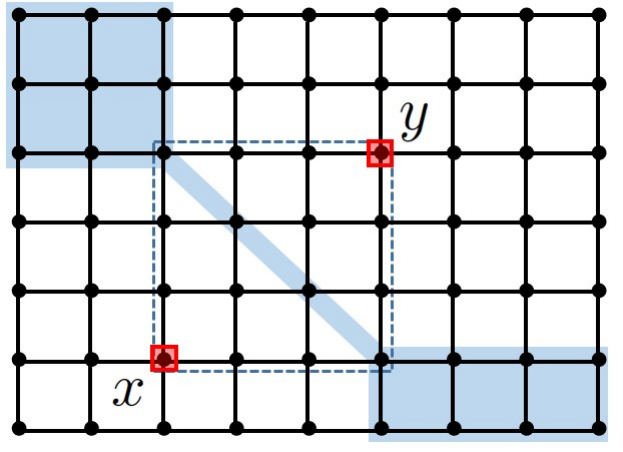}}
\end{tabular}
}%
\end{tabular}
\end{center}
\caption{Each figure shows the grid graph $G=P_{9}\Box P_{7}$, and a pair of
vertices $x,$ $y$ in it such that $d(x,y)$ is even. The set of vertices
$S_{0}(x,y)$ which contains all vertices $s\in V(G)$ such that $\Delta
_{s}(x,y)=0$ is shaded for: a) $\Delta j=0,$ b) $0<\Delta j<\Delta i,$ c)
$0<\Delta j=\Delta i.$}%
\label{Fig_S0}%
\end{figure}

\paragraph{Grid graphs.}

For a pair of graphs $G_{1}$ and $G_{2}$ with set of vertices $V(G_{1}%
)=\{u_{1},\ldots,u_{n_{1}}\}$ and $V(G_{2})=\{v_{1},\ldots,v_{n_{2}}\},$
respectively, the \emph{Cartesian product} $G_{1}\Box G_{2}$ is defined with
$V(G_{1}\Box G_{2})=V(G_{1})\times V(G_{2})$ and a pair of vertices
$(u_{i_{1}},v_{j_{1}})$ and $(u_{i_{2}},v_{j_{2}})$ is connected in $G_{1}\Box
G_{2}$ by an edge if and only if $u_{i_{1}}=u_{i_{2}}$ and $v_{j_{1}}v_{j_{2}%
}\in E(G_{2}),$ or $v_{j_{1}}=v_{j_{2}}$ and $u_{i_{1}}u_{i_{2}}\in E(G_{1}).$
In this paper we will consider a Cartesian product of paths, which is also
called a \emph{grid graph}. Let us now consider $\kappa(G)$ for a grid graph
$G.$ First, notice that the following auxiliary lemma holds.

\begin{lemma}
\label{Lemma_riste2}Let $G$ be a bipartite graph and let $x,y$ be a pair of
vertices of $G$. If $x$ and $y$ belong to distinct bipartition sets of $G,$
then $\Delta_{s}(x,y)\geq1$ for every $s\in V(G).$
\end{lemma}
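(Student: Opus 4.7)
The plan is to exploit the parity structure forced by a bipartition. Let $(A,B)$ be the bipartition of $G$, and recall the standard fact that in a bipartite graph every walk between two vertices in the same part has even length, while every walk between two vertices in different parts has odd length. In particular, for any vertices $u,v \in V(G)$, the parity of the distance $d(u,v)$ is even when $u$ and $v$ lie in the same part of the bipartition, and odd when they lie in different parts.

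Given this, I would fix an arbitrary $s \in V(G)$ and split into the two cases according to which bipartition class contains $s$. In the case $s \in A$, since $x$ and $y$ lie in distinct parts, exactly one of $d(x,s)$ and $d(y,s)$ is even and the other is odd; the same happens when $s \in B$. In either case the two distances $d(x,s)$ and $d(y,s)$ have opposite parities, hence they cannot be equal, so $\Delta_s(x,y) = |d(x,s) - d(y,s)| \geq 1$. Since $s$ was arbitrary, the claim follows.

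There is no serious obstacle here; the only thing to be mindful of is that the parity characterization of bipartite graphs must be applied to $s$ relative to both $x$ and $y$ simultaneously. The argument uses only the definition of bipartiteness and the fact that shortest paths inherit the parity of arbitrary paths in bipartite graphs.
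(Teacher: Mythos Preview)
Your argument is correct and is essentially the same parity argument as the paper's: since $x$ and $y$ lie in different bipartition classes, for any $s$ the distances $d(x,s)$ and $d(y,s)$ have opposite parities and hence cannot coincide. The case split on the part containing $s$ is harmless but unnecessary.
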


\begin{proof}
Since $x$ and $y$ belong to different bipartition sets of $G,$ it follows that
for every $s\in V(G)$ the numbers $d(x,s)$ and $d(y,s)$ are of distinct
parity. Consequently, $\Delta_{s}(x,y)=\left\vert d(x,s)-d(y,s)\right\vert
\geq1.$
\end{proof}

Notice that $G=P_{q}\Box P_{r}$ is a bipartite graph, and a pair of vertices
$x,y$ of $G$ belongs to distinct bipartition sets if and only if $d(x,y)$ is
odd, so the previous lemma applies to such pairs. Let us establish the weak
dimensionality of a grid graph in the next theorem.

\begin{theorem}
\label{Prop_grid_kapa}Any grid graph $G=P_{q}\Box P_{r},$ where $q,r\geq2,$ is
weak $(2q+2r-4)$-metric dimensional.
\end{theorem}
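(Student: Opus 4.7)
By Observation \ref{Obs_vertexSet}, it suffices to show $\min\{\Delta(x,y) : x, y \in V(G),\ x \neq y\} = 2q + 2r - 4$. For the upper bound, I exhibit the diagonally adjacent pair $x = (q - 1, 1)$ and $y = (q, 2)$ in a corner of the grid. A direct case analysis of $d(s, x) - d(s, y)$ for $s = (i, j)$, splitting on whether $i \leq q - 1$ or $i = q$ and whether $j = 1$ or $j \geq 2$, shows that $\Delta_s(x, y) = 2$ on exactly the $q - 1$ vertices with $i \leq q - 1, j = 1$ together with the $r - 1$ vertices with $i = q, j \geq 2$, while $\Delta_s(x, y) = 0$ elsewhere. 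Hence $\Delta(x, y) = 2(q - 1) + 2(r - 1) = 2q + 2r - 4$.

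For the matching lower bound, take any distinct $x, y \in V(G)$. If $d(x, y)$ is odd, Lemma \ref{Lemma_riste2} gives $\Delta_s(x, y) \geq 1$ for every $s$, hence $\Delta(x, y) \geq qr \geq 2q + 2r - 4$ since $(q - 2)(r - 2) \geq 0$. Assume henceforth $d(x, y)$ is even, write $x = (a, b)$ and $y = (c, d)$ with $a \leq c$ and $b \leq d$, and set $\Delta i = c - a$, $\Delta j = d - b$. Then $\Delta_s(x, y) = |\alpha(i) + \beta(j)|$, where $\alpha(i) = |i - a| - |i - c|$ and $\beta(j) = |j - b| - |j - d|$. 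Because $\alpha(i)$ has the parity of $\Delta i$, $\beta(j)$ has the parity of $\Delta j$, and $\Delta i + \Delta j$ is even, every nonzero $\Delta_s(x, y)$ is at least $2$. Therefore $\Delta(x, y) \geq 2\bigl(qr - |S_0(x, y)|\bigr)$ for $S_0(x, y) = \{(i, j) : \alpha(i) = -\beta(j)\}$, and the lower bound reduces to the claim $|S_0(x, y)| \leq (q - 1)(r - 1) + 1$.

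I bound $|S_0|$ in three subcases matching the panels of Figure \ref{Fig_S0}; the symmetric situations (swapping the roles of $\Delta i, \Delta j$, equivalently of $q, r$) are handled identically. If $\Delta j = 0$, then $\beta \equiv 0$ and $S_0$ is the single row $i = (a+c)/2$, so $|S_0| = r$. If $0 < \Delta j < \Delta i$, every value of $\beta(j)$ lies strictly inside the range of $\alpha$, so each $j$ admits a unique $i$ with $\alpha(i) = -\beta(j)$, whence $|S_0| = r$. Since $q, r \geq 2$ one has $\max(q, r) \leq (q - 1)(r - 1) + 1$, so the claim holds in these subcases. The delicate case is $\Delta j = \Delta i = p \geq 1$: the extreme values $\pm p$ of $\alpha$ have multiplicities $a$ and $q - a - p + 1$ (and analogously for $\beta$), while each intermediate value appears once, and a direct count yields
\[
|S_0| = a(r - b - p + 1) + b(q - a - p + 1) + (p - 1).
\]
This bilinear form in $(a, b) \in \{1, \ldots, q - p\} \times \{1, \ldots, r - p\}$ has a saddle at its interior critical point, so its maximum is attained on the boundary, with largest corner value $(q - p)(r - p) + p$ at $(1, r - p)$ and $(q - p, 1)$. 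The needed inequality $(q - p)(r - p) + p \leq (q - 1)(r - 1) + 1$ reduces after algebra to $(p - 1)(q + r - p - 2) \geq 0$, which holds since $p \geq 1$ and $q + r \geq 2p + 2$. Equality throughout forces $p = 1$ and $(a, b) \in \{(1, r - 1), (q - 1, 1)\}$, recovering the witness used for the upper bound. The main obstacle is this last bilinear maximization together with verifying that the corner bound is sharp precisely at the witness pair.
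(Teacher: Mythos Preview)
Your argument is correct. The upper-bound witness and the odd-distance case coincide with the paper's. For the even-distance lower bound you take a genuinely different route: you bound $|S_0(x,y)|$ over the \emph{entire} grid, proving $|S_0(x,y)|\le (q-1)(r-1)+1$ via an exact count in the three configurations of Figure~\ref{Fig_S0} and a bilinear maximization in the diagonal case $\Delta i=\Delta j=p$. The paper instead ignores interior vertices entirely and works only on the border $R$: it pairs $(u_i,v_1)$ with $(u_i,v_r)$ and $(u_1,v_j)$ with $(u_q,v_j)$, observes that at most one member of each pair can lie in $S_0(x,y)$ when $\Delta j>0$ (and only two border vertices total when $\Delta j=0$), and concludes $|R\setminus S_0(x,y)|\ge |R|/2=q+r-2$, hence $\Delta(x,y)\ge 2(q+r-2)$. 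The paper's pairing argument is shorter and avoids the optimization step; your global count is more laborious but yields the exact size of $S_0(x,y)$ and pinpoints that equality in the bound occurs precisely at the corner witness pair, information the paper's proof does not extract.
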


\begin{proof}
Let $P_{q}=u_{1}u_{2}\cdots u_{q}$ and $P_{r}=v_{1}v_{2}\cdots v_{r}.$
Further, let $V(G)=\{(u_{i},v_{j}):u_{i}\in V(P_{q}),v_{j}\in V(P_{r})\}.$
Since $\kappa(G)=\min\{\Delta(x,y)\,:\,x,y\in V(G)\},$ we consider the set
$S=V(G)$.

Let us first establish that $\kappa(G)\leq2q+2r-4.$ To do that, let us
consider the vertices $x=(u_{1},v_{2})$ and $y=(u_{2},v_{1}).$ Notice that
$\Delta_{s}(x,y)=2$ for any $s\in\{(u_{i},v_{1}):i\in\{2,\ldots,q\}\}\cup
\{(u_{1},v_{j}):j\in\{2,\ldots,r\}\},$ and each other $s\in S$ satisfies
$\Delta_{s}(x,y)=0.$ Hence, $\Delta(x,y)=2(q-1)+2(r-1)=2q+2r-4,$ which implies
$\kappa(G)\leq2q+2r-4.$

Let us now prove that $\kappa(G)\geq2q+2r-4.$ We need to show that
$\Delta(x,y)\geq2q+2r-4$ for every pair $x,y\in V(G).$ If $d(x,y)$ is odd,
then Lemma \ref{Lemma_riste2} implies $\Delta_{s}(x,y)\geq1$ for every $s\in
V(G),$ hence $\Delta(x,y)\geq\left\vert V(G)\right\vert =qr\geq2q+2r-4$ for
$q,r\geq2,$ so the claim holds.\ Assume therefore that $d(x,y)$ is even, and
let $x=(u_{i_{1}},v_{j_{1}})$, $y=(u_{i_{2}},v_{j_{2}}).$ We denote $\Delta
i=i_{2}-i_{1}$ and $\Delta j=j_{2}-j_{1},$ and we may assume that $0\leq\Delta
j\leq\Delta i.$ Denote by $S_{0}(x,y)$ the set of all vertices $s$ of the grid
such that $\Delta_{s}(x,y)=0.$ The set $S_{0}(x,y)$ is illustrated by Figure
\ref{Fig_S0}. Since $d(x,y)$ is even, every $s\in V(G)\backslash S_{0}(x,y)$
satisfies $\Delta_{s}(x,y)\geq2$.

Let
\[
R=\{(u_{i},v_{j}):i\in\{1,q\}\text{ or }j\in\{1,r\}\}
\]
be the set of vertices on the border of $G.$ We claim that at least half of
the vertices of $R$ are not contained in $S_{0}(x,y).$ Assume first that
$j_{1}=j_{2},$ i.e. $\Delta j=0.$ Then $S_{0}(x,y)=\{(u_{i},v_{j}%
):i=i_{1}+\Delta i/2,j\in\lbrack r]\}$ as shown in Figure \ref{Fig_S0}.a), so
$\left\vert S_{0}(x,y)\cap R\right\vert =2.$ Since $R$ contains at least $4$
vertices, the claim holds. Assume next that $j_{1}\not =j_{2},$ i.e. $\Delta
j=j_{2}-j_{1}>0.$ Notice that $s=(u_{i},v_{j})\in S_{0}(x,y)$ implies
$d(x,s)=d(y,s),$ and so%
\[
\left\vert i_{1}-i\right\vert +\left\vert j_{1}-j\right\vert =\left\vert
i_{2}-i\right\vert +\left\vert j_{2}-j\right\vert .
\]
Let us consider the pair of vertices\thinspace$(u_{i},v_{1})$ and
$(u_{i},v_{r})$ from $R.$ Assuming that both $(u_{i},v_{1})$ and $(u_{i}%
,v_{r})$ belong to $S_{0}(x,y),$ from the above equality we obtain $\left\vert
i_{1}-i\right\vert =\left\vert i_{2}-i\right\vert +\Delta j$ and $\left\vert
i_{1}-i\right\vert +\Delta j=\left\vert i_{2}-i\right\vert $, a contradiction.
Hence, at most one of these two vertices can belong to $S_{0}(x,y).$ This is
illustrated by Figures \ref{Fig_S0}.b) and \ref{Fig_S0}.c). Similarly, at most
one vertex from the pair $(u_{1},v_{j}),(u_{q},v_{j})\in R$ can belong to
$S_{0}(x,y),$ so the claim again holds.

By the above, we have that at least half of vertices of $R$ do not belong to
$S_{0}(x,y).$ Since $\left\vert R\right\vert =2q+2r-4,$ we conclude
$\Delta(x,y)\geq2\cdot\left\vert R\right\vert /2=2q+2r-4,$ and we are done.
\end{proof}

\section{Weak $k$-metric dimension for some graph classes}

This section is focused on computing the value of the weak $k$-metric
dimension of some graphs. To this end, we use the suitable values for $k$
already proved to be satisfied in Section \ref{sec:kappa}. First, the
following lemma is also useful for establishing the weak $k$-metric dimension
of graphs.

\begin{proposition}
\label{Lemma_auxiliary}If $G$ is a connected graph with $n$ vertices, then
$k\leq\operatorname*{wdim}_{k}(G)\leq n.$
\end{proposition}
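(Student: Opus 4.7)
The plan is to prove the two inequalities separately. The lower bound $k \leq \operatorname{wdim}_k(G)$ will follow from the triangle inequality applied to adjacent vertices, and the upper bound $\operatorname{wdim}_k(G) \leq n$ will follow immediately from Observation \ref{Obs_vertexSet} by taking $S = V(G)$.

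For the lower bound, I would begin by choosing any edge $xy \in E(G)$, which exists because $G$ is connected and nontrivial. The key observation is that for an arbitrary vertex $s \in V(G)$, the triangle inequality gives $|d_G(x,s) - d_G(y,s)| \leq d_G(x,y) = 1$, so $\Delta_s(x,y) \leq 1$. Now let $S$ be any weak $k$-resolving set of $G$. By definition, $\sum_{s \in S} \Delta_s(x,y) \geq k$, but the per-vertex bound just established yields $\sum_{s \in S} \Delta_s(x,y) \leq |S|$. Combining these gives $|S| \geq k$, and since $S$ was arbitrary, $\operatorname{wdim}_k(G) \geq k$.

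For the upper bound, I would rely on Observation \ref{Obs_vertexSet}, which identifies $\kappa(G) = \min\{\Delta(x,y) : x,y \in V(G)\}$. Since $\operatorname{wdim}_k(G)$ is only defined (i.e., $k$ is a suitable value) when $k \leq \kappa(G)$, this means $\Delta_{V(G)}(x,y) = \Delta(x,y) \geq \kappa(G) \geq k$ for every pair $x,y \in V(G)$. Therefore $S = V(G)$ is itself a weak $k$-resolving set, of size $n$, forcing $\operatorname{wdim}_k(G) \leq n$.

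There is no serious obstacle in this proof: the work has essentially already been done in the preceding observations and remarks. The only subtlety is making sure the triangle-inequality estimate is applied to a pair of adjacent vertices (so that $\Delta_s(x,y) \leq 1$ can be asserted uniformly over $s \in S$), which is why the connectedness hypothesis is used.
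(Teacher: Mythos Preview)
Your proof is correct and follows essentially the same approach as the paper: both establish the lower bound by picking adjacent vertices $x,y$ and using $\Delta_s(x,y)\leq 1$ for all $s$, and both obtain the upper bound by noting that $V(G)$ itself is a weak $k$-resolving set for any suitable $k$. Your justification of the upper bound via Observation~\ref{Obs_vertexSet} is in fact slightly more explicit than the paper's one-line remark.
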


\begin{proof}
Let $x,y\in V(G)$ be a pair of neighboring vertices of $G.$ Then
\[
\Delta_{s}(x,y)=\left\vert d(x,s)-d(y,s)\right\vert \leq1
\]
for every $s\in V(G).$ So, since $\Delta_{S}(x,y)=\sum_{s\in S}\left\vert
d(x,s)-d(y,s)\right\vert \geq k$ needs to be satisfied, the set $S$ must
contain at least $k$ vertices, which implies $\operatorname*{wdim}_{k}(G)\geq
k$. The claim $\operatorname*{wdim}_{k}(G)\leq n$ is the direct consequence of
the fact that $S$ can contain at most $n$ vertices.
\end{proof}

The obvious consequence of the lemma above is that $\kappa(G)\leq n$ for any
graph $G$ on $n$ vertices. Let us now establish the weak $k$-metric dimension
of some simple graphs such as paths, stars, cycles, complete graphs and
complete bipartite graphs.

\begin{proposition}
The following statements hold:

\begin{itemize}
\item[\textrm{(i)}] If $K_{n}$ is a complete graph with $n\geq2$ vertices,
then $\operatorname*{wdim}_{k}(K_{n})=n-1$ for $k=1,$ and
$\operatorname*{wdim}_{k}(K_{n})=n$ for $k=2.$

\item[\textrm{(ii)}] If $S_{n}$ is a star with $n\geq5$ vertices, then
$\operatorname*{wdim}_{k}(S_{n})=n-2$ for $k\in\{1,2\},$ and
$\operatorname*{wdim}_{k}(S_{n})=n-1$ for $k\in\{3,4\}.$

\item[\textrm{(iii)}] If $K_{q,r}$ is a complete bipartite graph with
$q,r\geq2,$ then $\operatorname*{wdim}_{k}(K_{q,r})=q+r-2$ for $k\in\{1,2\},$
and $\operatorname*{wdim}_{k}(K_{q,r})=q+r$ for $k\in\{3,4\}.$

\item[(iv)] If $P_{n}$ is a path with $n\geq2$ vertices, then
$\operatorname*{wdim}_{k}(P_{n})=k$ for $k\in\{1,\ldots,n\}.$

\item[\textrm{(v)}] If $C_{n}$ is a cycle with $n\geq5$ vertices, then
$\operatorname*{wdim}_{1}(C_{n})=2,$ and for $k\in\{2,\ldots,n\}$ it holds
that $\operatorname*{wdim}_{k}(C_{n})=k+1$ if $n$ is odd, and
$\operatorname*{wdim}_{k}(C_{n})=k$ if $n$ is even.
\end{itemize}
\end{proposition}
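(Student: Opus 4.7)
The plan is to prove each of the five items separately, using Proposition~\ref{Lemma_auxiliary} (i.e., $\operatorname*{wdim}_{k}(G)\geq k$) as the baseline lower bound and exhibiting explicit weak $k$-resolving sets for the upper bound. For items (i)--(iii), sharper lower bounds come from twin pairs: for twins $x,y$ we have $\Delta_{s}(x,y) = 0$ for every $s\notin\{x,y\}$, so $\Delta_S(x,y)$ depends only on $|S\cap\{x,y\}|$, equalling $|S\cap\{x,y\}|$ for true twins (any two vertices of $K_n$) and $2|S\cap\{x,y\}|$ for false twins (leaves of $S_n$, and vertices in one side of $K_{q,r}$). This forces $S$ to contain most of each twin class: for thresholds $k\leq 2$ at most one vertex per class may be omitted, and for $k\geq 3$ the whole class must be kept. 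For cross-class pairs in $S_n$ and $K_{q,r}$ every vertex $s$ contributes exactly $1$ to $\Delta_{s}(c,\ell_i)$ or $\Delta_{s}(a_i,b_j)$, so the cross-class condition reduces to $|S|\geq k$. Combining the two constraints yields the claimed values, and the set consisting of the full twin class(es) (or each minus one vertex, depending on $k$) realizes the bound.

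For the path $P_n$ in item~(iv), I would take $S$ to be $k$ consecutive vertices, say $S = \{u_1,\ldots,u_k\}$. The lower bound $\operatorname*{wdim}_{k}(P_n)\geq k$ is Proposition~\ref{Lemma_auxiliary}. For the upper bound, given a pair $u_a,u_b$ with $a<b$ and $d=b-a$, the contribution $\Delta_{u_j}(u_a,u_b)$ equals $d$ whenever $j\leq a$ or $j\geq b$, and equals $|2j-a-b|\geq 0$ when $a<j<b$. For adjacent pairs ($d=1$) every vertex of $S$ contributes $1$, giving $\Delta_S = k$. For $d\geq 2$, a short case split on whether $S$ lies entirely on one side of the pair or straddles it shows that the contribution from the ``outside'' vertices of $S$ alone (of which there are at least $k-(d-1)$) totals at least $d(k-d+1)\geq k$ for $2\leq d\leq k-1$, while the remaining ranges of $d$ are handled even more easily.

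The cycle case~(v) is the most delicate. For an adjacent pair $v_a,v_{a+1}$ in odd $C_n$ there is exactly one vertex, namely the antipode $m(a) = v_{a+(n+1)/2}$ (indices mod $n$), for which $\Delta_{m(a)}(v_a,v_{a+1}) = 0$; every other vertex contributes $1$. As $a$ ranges over $\{0,\ldots,n-1\}$ the map $a\mapsto m(a)$ is a bijection, so any set $S$ of size $k\leq n-1$ contains $m(a)$ for some pair, forcing $\Delta_S(v_a,v_{a+1}) = |S|-1$ and hence $|S|\geq k+1$. Conversely, $k+1$ consecutive vertices $\{v_0,\ldots,v_k\}$ suffice, which I would verify by a direct distance computation plus the bipartite-parity fact (Lemma~\ref{Lemma_riste2}) for pairs at odd distance. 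For even $n$, parities give $\Delta_{s}(v_a,v_{a+1}) = 1$ for every $s$, so $k$ consecutive vertices already handle adjacent pairs; for non-adjacent pairs, at most two ``midpoints'' contribute zero and the remaining contributions of at least $2$ cover the deficit.

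The main obstacle is the odd-cycle bijection argument: both recognizing that the lower bound must be $k+1$ (the key insight distinguishing odd cycles from paths and even cycles) and, for the even cycle, handling short-distance pairs where up to two vertices give zero contributions, which in turn constrains the choice of $S$ and requires one to argue that the remaining at least $k-2$ contributions of $2$ still sum to at least $k$.
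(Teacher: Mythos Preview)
Your overall plan matches the paper's: the same witness sets (all-but-one or all of each twin class for $K_n$, $S_n$, $K_{q,r}$; $k$ consecutive vertices for $P_n$; $k$ or $k+1$ consecutive vertices for $C_n$), and the same twin-based lower bounds for (i)--(iii). Two points need correction.

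\medskip
\noindent\textbf{Odd cycles.} You propose to verify that $k+1$ consecutive vertices resolve every pair in an odd $C_n$ by invoking Lemma~\ref{Lemma_riste2} for pairs at odd distance. But odd cycles are not bipartite, so that lemma does not apply at all. The replacement fact (which the paper uses) is purely combinatorial: in an odd cycle, for \emph{any} pair $x,y$ there is at most one vertex $s$ with $d(x,s)=d(y,s)$, since the two arcs between $x$ and $y$ have lengths of opposite parity and hence only one of them has an integer midpoint. With $|S|=k+1$ this immediately gives $\Delta_S(x,y)\ge |S|-1=k$, with no parity split needed.

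\medskip
\noindent\textbf{Even cycles, small $k$.} Your final inequality ``$k-2$ contributions of $2$ sum to at least $k$'' gives $2(k-2)\ge k$, which fails for $k\in\{2,3\}$; you flag this but do not close it. The fix the paper uses exploits that $S$ is a block of consecutive vertices: if some $s_0\in S$ has $\Delta_{s_0}(x,y)=0$, then $s_0$ has a neighbor $s_1\in S$, and moving one step along the cycle changes each of $d(x,\cdot)$, $d(y,\cdot)$ by $\pm 1$, so $\Delta_{s_1}(x,y)=2$. This yields $\Delta_S(x,y)\ge 0+2+(k-2)\cdot 1=k$. If both midpoints lie in $S$ they are antipodal, so for $n\ge 6$ they have distinct neighbors in $S$, giving $0+0+2+2+(k-4)=k$; this case can only occur once $k\ge n/2+1\ge 4$, so the count is consistent. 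Your sketch has the right set $S$, but the consecutive-block structure is what makes the small-$k$ cases go through.
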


\begin{proof}
$(i)$ Denote vertices of $K_{n}$ by $u_{1},\ldots,u_{n}.$ Let us first
establish that $\operatorname*{wdim}_{k}(K_{n})\geq n-1$ for every
$k\in\{1,2\}.$ For that purpose, we need to show that a set $S\subseteq
V(K_{n})$ such that $\left\vert S\right\vert \leq n-2$ cannot be a weak
$k$-resolving set for any $k.$ Since $\left\vert S\right\vert \leq n-2,$ it
follows that $S$ does not contain at least two vertices of $K_{n},$ say
$u_{1}$ and $u_{2}.$ Then $\Delta_{S}(u_{1},u_{2})=0,$ so $S$ is not a weak
$k$-resolving set. Let us now consider a set $S\subseteq V(K_{n})$ with
$\left\vert S\right\vert =n-1.$ We need to establish that at least one such
set is a weak $1$-resolving set, hence $\operatorname*{wdim}_{1}(K_{n})=n-1,$
and that none of them is a weak $2$-resolving set. Since $\left\vert
S\right\vert =n-1,$ the set $S$ does not contain at least one vertex from
$K_{n},$ say $u_{1}\not \in S.$ It follows that $\Delta_{S}(u_{1},u_{j})=1$
for every $j\not =1,$ and $\Delta_{S}(u_{i},u_{j})=2$ for every $2\leq i<j\leq
n.$ Thus, since $\Delta_{S}(u_{i},u_{j})\geq1$ for every $1\leq i<j\leq n,$ it
follows that $S$ is a weak $1$-resolving set and $\operatorname*{wdim}%
_{1}(K_{n})=n-1.$ On the other hand, since $\Delta_{S}(u_{1},u_{j})=1$ for
every $j\not =1$, we conclude that $S$ is not a weak $2$-resolving set.
Finally, in order to establish that $\operatorname*{wdim}_{2}(K_{n})=n,$ it is
sufficient to show that for $S=V(K_{n})$ it holds that $\Delta_{S}(u_{i}%
,u_{j})=2$ for every $1\leq i<j\leq n,$ and this is obvious.$\bigskip$

$(ii)$ Denote the central vertex of $S_{n}$ by $u$ and its neighbors by
$u_{1},\ldots,u_{n-1}.$ Let us first establish that a set $S\subseteq
V(S_{n})$ with $\left\vert S\right\vert \leq n-3$ cannot be a weak
$k$-resolving set for any $k\geq1.$ Since $\left\vert S\right\vert \leq n-3,$
it follows that $S$ does not contain at least two leaves, say $u_{1}$ and
$u_{2}.$ Then $\Delta(u_{1},u_{2})=0,$ so $S$ is not a weak $k$-resolving set.

We next consider a set $S\subseteq V(G)$ with $\left\vert S\right\vert =n-2.$
We must establish that at least one such set is a weak $k$-resolving set for
$k\in\{1,2\},$ hence $\operatorname*{wdim}_{1}(S_{n})=\operatorname*{wdim}%
_{2}(S_{n})=n-2,$ and that no such set is a weak $k$-resolving set for
$k\in\{3,4\}.$ Since $\left\vert S\right\vert =n-2,$ the set $S$ does not
contain precisely two vertices of $S_{n}.$ Assume first that $S$ does not
contain two leaves of $S,$ say $V(S_{n})\backslash S=\{u_{1},u_{2}\},$ then
$\Delta_{S}(u_{1},u_{2})=0$ and consequently $S$ is not a weak $k$-resolving
set for any $k\geq1.$ Assume next that $S$ does not contain a central vertex
$u$ and one of the leaves, say $V(S_{n})\backslash S=\{u,u_{1}\}.$ First, for
every integer $1\leq j\leq n-1,$ we have $\Delta_{S}(u,u_{j})=n-2\geq5-2=3.$
Next, for any integer $2\leq j\leq n-1$, we have $\Delta_{S}(u_{1},u_{j})=2.$
Finally, for any pair of integers $2\leq i<j\leq n-1,$ we have $\Delta
_{S}(u_{i},u_{j})=4.$ Since $\Delta_{S}(x,y)\geq2$ for any $x,y\in V(S_{n}),$
we conclude that $S$ is a weak $k$-resolving set for $k\in\{1,2\}.$ On the
other hand, since $\Delta_{S}(u_{1},u_{j})=2,$ we conclude that $S$ is not a
weak $k$-resolving set for $k\geq3.$

Finally, in order to establish that $\operatorname*{wdim}_{3}(S_{n}%
)=\operatorname*{wdim}_{4}(S_{n})=n-1,$ it is sufficient to find one set
$S\subseteq V(S_{n})$ with $\left\vert S\right\vert =n-1$ which is a weak
$k$-resolving set for $k\in\{3,4\}.$ For that purpose, let us consider the set
$S=\{u_{1},\ldots,u_{n-1}\}.$ Notice that for every pair of integers $1\leq
i<j\leq n-1,$ it holds that $\Delta_{S}(u_{i},u_{j})=4.$ Also, for every
integer $1\leq i\leq n-1,$ it holds that $\Delta_{S}(u_{i},u)=\left\vert
S\right\vert =n-1\geq5-1=4.$ Thus, $S$ is a weak $k$-resolving set for $k=3$
and $k=4.$ $\bigskip$

$(iii)$ Let $A$ and $B$ be the bipartition sets of $V(K_{q,r})$, such that
$\left\vert A\right\vert =q$ and $\left\vert B\right\vert =r.$ Assume further
that vertices of $A$ and $B$ are denoted by $A=\{a_{1},\ldots,a_{q}\}$ and
$B=\{b_{1},\ldots,b_{r}\}.$ Let us first prove that $\operatorname*{wdim}%
_{k}(K_{q,r})\geq q+r-2$ for any $k\geq1.$ In order to do so, let $S\subseteq
V(K_{q,r})$ such that $\left\vert S\right\vert \leq q+r-3,$ and we have to
show that $S$ cannot be a weak $k$-resolving set for any $k.$ Notice that $S$
does not contain at least two vertices from at least one bipartition set, say
$a_{1}$ and $a_{2}$ from $A.$ Since $a_{1}$ and $a_{2}$ are false twins, we
have $\Delta_{S}(a_{1},a_{2})=0$, so $S$ is not a weak $k$-resolving set.

Next, let us prove that $\operatorname*{wdim}_{1}(K_{q,r}%
)=\operatorname*{wdim}_{2}(K_{q,r})=q+r-2.$ It is sufficient to find a weak
$k$-resolving set $S$ of $K_{p,q}$, for $k\in\{1,2\},$ such that $\left\vert
S\right\vert =q+r-2.$ For that purpose, let $S=\{a_{i},b_{j}:i\in\lbrack
q-1],j\in\lbrack r-1]\},$ and we have to show that $\Delta_{S}(x,y)\geq2$ for
every $x,y\in V(K_{q,r}).$ Notice that $\Delta_{s}(a_{i},b_{j})\geq1$ for any
$s\in V(K_{q,r}),$ thus $\Delta_{S}(a_{i},b_{j})\geq\left\vert S\right\vert
=q+r-2\geq2.$ Notice further that $\Delta_{S}(a_{i},a_{j})=4$ if
$q\not \in \{i,j\},$ otherwise $\Delta_{S}(a_{i},a_{j})=2.$ Similarly,
$\Delta_{S}(b_{i},b_{j})=4$ if $r\not \in \{i,j\},$ otherwise $\Delta
_{S}(b_{i},b_{j})=2.$ We conclude that $S$ is a weak $k$-resolving set for
$k\in\{1,2\}.$

Finally, let us show that $\operatorname*{wdim}_{3}(K_{q,r}%
)=\operatorname*{wdim}_{3}(K_{q,r})=q+r.$ In order to do so, let us first
establish that any set $S\subseteq V(K_{q,r})$ with $\left\vert S\right\vert
\leq q+r-1$ cannot be a weak $k$-resolving set for $k\geq3.$ Notice that such
a set $S$ does not contain at least one vertex from $V(K_{p,q})$, say
$a_{1}\not \in S.$ Then, $\Delta_{s}(a_{1},a_{2})=2$ for $s=a_{2},$ otherwise
$\Delta_{s}(a_{1},a_{2})=0.$ Consequently, we have $\Delta_{S}(a_{1}%
,a_{2})\leq2,$ so $S$ is not a weak $k$-resolving set for $k\geq3.$ Next, we
have to find at least one weak $k$-resolving set $S$ with $\left\vert
S\right\vert =q+r$ for $k\in\{3,4\}.$ For that purpose, let us consider
$S=V(K_{q,r}).$ Notice that $\Delta_{S}(a_{i},a_{j})=4,$ $\Delta_{S}%
(b_{i},b_{j})=4,$ and $\Delta_{S}(a_{i},b_{j})=\left\vert S\right\vert
=q+r\geq4.$ Thus, $S$ is a weak $k$-resolving set for $k\in\{3,4\}$ and we are
done.$\bigskip$

$(iv)$ Assume the notation $P_{n}=u_{1}u_{2}\cdots u_{n}.$ Proposition
\ref{Lemma_auxiliary} implies $\operatorname*{wdim}_{k}(P_{n})\geq k$ for
every feasible $k.$ So, let us prove that $\operatorname*{wdim}_{k}(P_{n})\leq
k$, by showing that any $S=\{u_{1},\ldots,u_{k}\}\subseteq V(P_{n})$ is a weak
$k$-resolving set. Notice that for any pair $x,y\in V(P_{n})$ there exists at
most one vertex $s_{0}\in S$ such that $\Delta_{s_{0}}(x,y)=0.$ If such a
vertex $s_{0}\in S$ does not exist, then $\Delta_{s}(x,y)\geq1$ for every
$s\in S,$ so $\Delta_{S}(x,y)\geq\left\vert S\right\vert =k.$ On the other
hand, if such a vertex $s_{0}\in S$ does exist, this implies that $x$ and $y$
are on an even distance in $P_{n},$ hence $d(x,y)\geq2$ and $\Delta
_{s}(x,y)\geq2$ for every $s\in S\backslash\{s_{0}\}.$ Also, the existence of
such a vertex $s_{0}$ implies $k\geq2,$ since for $k=1$ the set $S$ consists
only of an end-vertex of $P_{n}.$ We conclude that $\Delta_{S}(x,y)\geq
0+2(k-1)\geq k$ for $k\geq2.$ Therefore, we have established that $\Delta
_{S}(x,y)\geq k$ for every $x,y\in V(P_{n}),$ which implies $S$ is a weak
$k$-resolving set, so $\operatorname*{wdim}_{k}(P_{n})\leq\left\vert
S\right\vert =k.$

$\bigskip$

$(v)$ For $k=1$ the weak $k$-dimension $\operatorname*{wdim}_{k}(G)$ is the
classical (vertex) metric dimension, so it is already known in the literature
that $\operatorname*{wdim}_{1}(C_{n})=2.$ Hence, assume that $k\geq2$ and
denote $C_{n}=u_{1}u_{2}\cdots u_{n}u_{1}.$ Vertices $x,y$ of an even length
cycle $C_{n}$ are \emph{antipodal}, if $d(x,y)=n/2.$ On an odd length cycle
$C_{n},$ a pair of vertices $y$ and $y^{\prime}$ is \emph{antipodal} to a
vertex $x$ if $d(x,y)=d(x,y^{\prime})=\left\lfloor n/2\right\rfloor .$ Notice
that in this case vertices $y$ and $y^{\prime}$ must be neighbors.

Proposition \ref{Lemma_auxiliary} implies $\operatorname*{wdim}_{k}(C_{n})\geq
k.$ Also, for any pair of neighboring vertices $x,y\in V(C_{n})$ and any
vertex $s\in V(C_{n})$ it holds that $\Delta_{s}(x,y)\leq1,$ where $\Delta
_{s}(x,y)=0$ if and only if $x$ and $y$ are the antipodal pair of $s$ on an
odd length cycle $C_{n}.$

Let us first consider the case of an odd length cycle $C_{n}.$ To establish
that $\operatorname*{wdim}_{k}(C_{n})\geq k+1,$ we have to show that any set
$S\subseteq V(G)$ with $\left\vert S\right\vert =k$ cannot be a weak
$k$-resolving set. For that purpose, let $s_{0}\in S$ be any vertex of $S.$ If
$x,y$ form an antipodal pair of vertices of the vertex $s_{0},$ then
$\Delta_{s_{0}}(x,y)=0.$ Since $x$ and $y$ are neighbors, we have that
$\Delta_{s}(x,y)=1$ for every $s\in S\backslash\{s_{0}\}$. We conclude that
$\Delta_{S}(x,y)=\left\vert S\right\vert -1=k-1,$ so $S$ is not a weak
$k$-resolving set and $\operatorname*{wdim}_{k}(C_{n})\geq k+1.$ To establish
that $\operatorname*{wdim}_{k}(C_{n})\leq k+1,$ it is sufficient to find a set
$S\subseteq V(C_{n})$ with $\left\vert S\right\vert =k+1$ which is a weak
$k$-resolving set. For that purpose, consider the set $S=\{u_{1}%
,\ldots,u_{k+1}\}\subseteq V(C_{n}).$ Let $x,y$ be any pair of vertices of
$C_{n}$. Observe that there exists at most one vertex $s_{0}\in S$ such that
$\Delta_{s_{0}}(x,y)=0,$ and for every $s\in S\backslash\{s_{0}\}$ we have
that $\Delta_{s}(x,y)\geq1.$ Thus $\Delta_{S}(x,y)\geq k,$ so $S$ is a weak
$k$-resolving set, which implies $\operatorname*{wdim}_{k}(C_{n}%
)\leq\left\vert S\right\vert =k+1.$

Let us now consider the case of an even length cycle $C_{n}.$ Proposition
\ref{Lemma_auxiliary} implies $\operatorname*{wdim}_{k}(C_{n})\geq k$, so it
remains to establish that $\operatorname*{wdim}_{k}(C_{n})\leq k.$ It is
sufficient to find a weak $k$-resolving set $S$ in $C_{n}$ with $\left\vert
S\right\vert =k.$ For that purpose, consider the set $S=\{u_{1},\ldots
,u_{k}\}.$ Observe that for a pair of vertices $x,y\in V(C_{n})$ there are at
most two vertices $s\in S$ such that $\Delta_{s}(x,y)=0.$ Now, if there are no
vertices in $S$ such that $\Delta_{s}(x,y)=0,$ then $\Delta_{s}(x,y)\geq1$ for
every $s\in S,$ so $\Delta_{S}(x,y)\geq\left\vert S\right\vert =k.$ If there
is precisely one vertex $s_{0}\in S$ such that $\Delta_{s_{0}}(x,y)=0,$ then
there exists a neighbor $s_{1}\in S$ of $s_{0}$ such that $\Delta_{s_{1}%
}(x,y)=2,$ and for all other vertices $s\in S$ we have that $\Delta
_{s}(x,y)\geq1.$ Thus, $\Delta_{S}(x,y)\geq0+2+(k-2)=k.$ Finally, if there are
two vertices $s_{0},s_{0}^{a}\in S$ such that $\Delta_{s_{0}}(x,y)=\Delta
_{s_{0}^{a}}(x,y)=0,$ then $s_{0}$ and $s_{0}^{a}$ must be an antipodal pair
on $C_{n}.$ Therefore, $n\not =4$ implies that there exists a neighbor
$s_{1}\in S$ of $s_{0}$ and a neighbor $s_{1}^{a}\in S$ of $s_{0}^{a}$ such
that $s_{1}\not =s_{1}^{a}$, $\Delta_{s_{1}}(x,y)\geq2$ and $\Delta_{s_{2}%
}(x,y)\geq2.$ In addition, any other vertex $s\in S$ satisfies $\Delta
_{s}(x,y)\geq1,$ so we conclude $\Delta_{S}(x,y)\geq0+2+0+2+(k-4)=k.$ We have
shown that $\Delta_{S}(x,y)\geq k$ for all pairs $x,y\in V(C_{n}),$ so $S$ is
a weak $k$-resolving set, which implies $\operatorname*{wdim}_{k}(C_{n}%
)\leq\left\vert S\right\vert =k,$ and we are done.
\end{proof}

In view of Proposition \ref{Lemma_auxiliary}, it would be interesting to
characterize all the graphs attaining the equality in such bounds.

\subsection{Trees}

Next, we wish to determine the weak $k$-metric dimension of a tree $T$ for
every $k$ such that the weak $k$-metric dimension is defined. We need to
consider the tree $T=S^{3}(l_{1},l_{2},l_{3})$ separately, since the weak
$k$-metric dimension of such trees behaves differently. \begin{figure}[ptbh]
\begin{center}%
\begin{tabular}
[c]{ll}%
a) \raisebox{-0.9\height}{\includegraphics[scale=0.6]{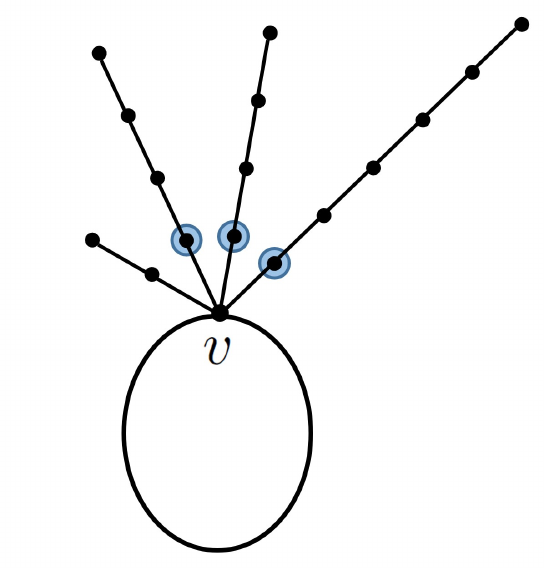}} & b)
\raisebox{-0.9\height}{\includegraphics[scale=0.6]{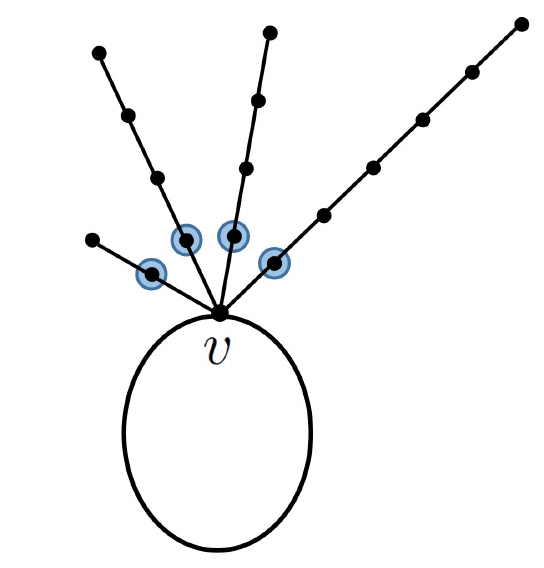}}\\
c) \raisebox{-0.9\height}{\includegraphics[scale=0.6]{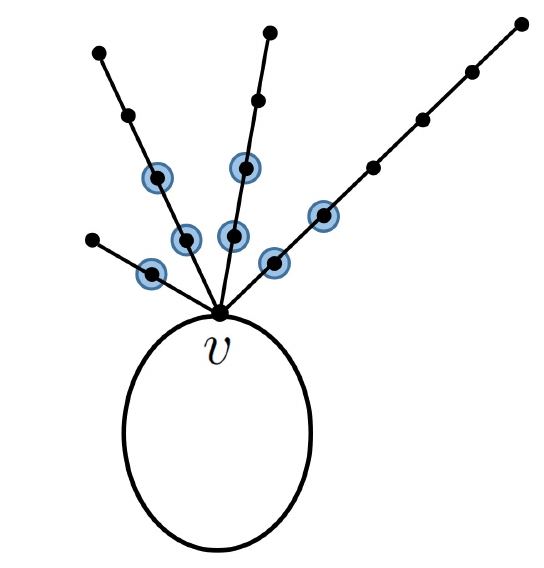}} & d)
\raisebox{-0.9\height}{\includegraphics[scale=0.6]{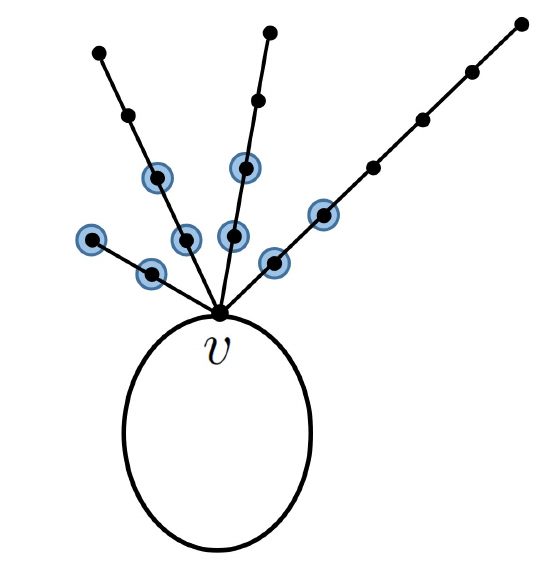}}\\
e) \raisebox{-0.9\height}{\includegraphics[scale=0.6]{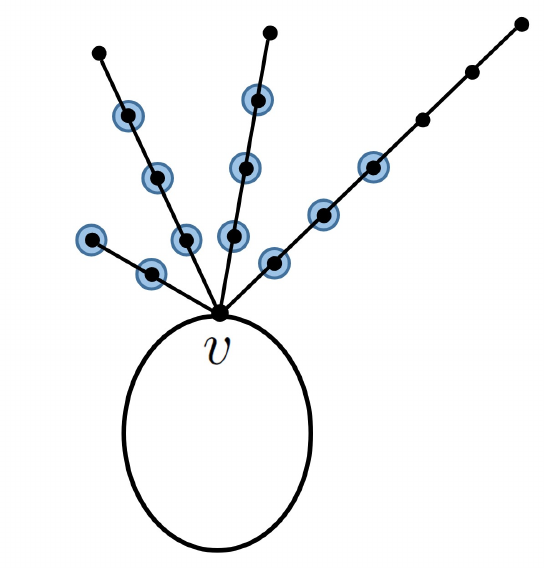}} & f)
\raisebox{-0.9\height}{\includegraphics[scale=0.6]{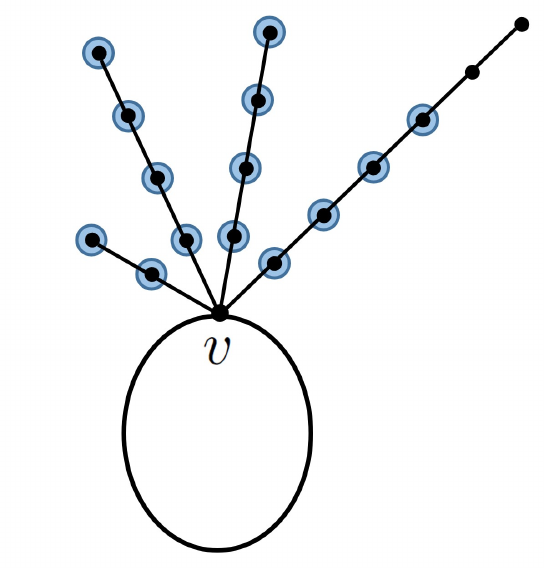}}
\end{tabular}
\end{center}
\caption{Each figure shows a root vertex $v$ with $\ell(v)=4$ and the lengths
of threads $(l_{1},l_{2},l_{3},l_{4})=(2,4,4,6).$ The set $S_{k}(v)$ is marked
in a figure for: a) $k\in\{1,2\},$ b) $k\in\{3,4\},$ c) $k\in\{5,6\}$, d)
$k\in\{7,8\},$ e) $k\in\{9,10\},$ f) $k\in\{11,12\}.$ For higher values of $k$
the weak $k$-metric dimension is not defined.}%
\label{Fig_trees}%
\end{figure}

\begin{proposition}
If $T=S^{3}(l_{1},l_{2},l_{3})$ is a tree on $n$ vertices, then
$\operatorname*{wdim}\nolimits_{1}(T)=2$ and $\operatorname*{wdim}%
\nolimits_{k}(T)=k$ for $2\leq k\leq\kappa(T)=\min\{n,\kappa^{\ast}(T)\}.$
\end{proposition}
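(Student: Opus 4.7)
The plan is the following. For $k=1$, Corollary \ref{cor:dim-equal-wdim-k-1} gives $\operatorname*{wdim}\nolimits_1(T)=\dim(T)$, and since any spider tree with $\ell\ge 2$ legs has classical metric dimension $\ell-1$ (a standard result), we have $\dim(T)=2$. For $2\le k\le\kappa(T)$ the lower bound $\operatorname*{wdim}\nolimits_k(T)\ge k$ is immediate from Proposition \ref{Lemma_auxiliary}, so the task reduces to exhibiting a weak $k$-resolving set of size exactly $k$.

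Label the three threads so that the $i$-th thread is the path $v=u^i_0 u^i_1\cdots u^i_{l_i}$ for $i\in\{1,2,3\}$ with $l_1\le l_2\le l_3$. The construction is to pick non-negative integers $s_v\in\{0,1\}$ and $s_i$ with $0\le s_i\le l_i$, $s_v+s_1+s_2+s_3=k$, and $s_i+s_j\ge\lceil k/2\rceil$ for every pair $\{i,j\}\subseteq\{1,2,3\}$, and to define $S$ as the set of the last $s_i$ vertices on each thread (those closest to the leaf), together with $v$ if $s_v=1$. Then $|S|=k$ by construction. To check that $S$ is weak $k$-resolving, I would apply Lemma \ref{Lemma_Riste} to an arbitrary pair $x,y\in V(T)$ at distance $d$: when $d$ is odd or $x,y$ are adjacent, every weight $|d-2i|$ is at least $1$, so $\Delta_S(x,y)\ge|S|=k$; when $d$ is even and the middle component $T_{d/2}$ of $T-E(P)$ is a singleton, which covers pairs lying on a common thread, pairs of the form $(v,u^i_a)$, and pairs $(u^i_a,u^j_b)$ with $i\ne j$ and $a\ne b$, the weights $|d-2i|\ge 2$ for $i\ne d/2$ yield $\Delta_S(x,y)\ge 2(k-1)\ge k$.

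The genuinely delicate case is a pair $x=u^i_a$, $y=u^j_a$ with $i\ne j$ and $a\ge 1$, where $T_{d/2}=\{v\}\cup P_t$ for $t$ the third thread. A direct count via Lemma \ref{Lemma_Riste} gives $\Delta_S(x,y)\ge 2(s_i+s_j)\ge 2\lceil k/2\rceil\ge k$, so the pair-sum constraints on the $s_i$ are precisely what is needed. It remains to verify that the vector $(s_1,s_2,s_3,s_v)$ with all prescribed constraints exists for every $k\in[2,\kappa(T)]$: when $k\le n-1$ one takes $s_v=0$ and distributes $k$ vertices among the threads under the caps $s_i\le\min(l_i,\lfloor k/2\rfloor)$, while when $k=n$, which forces $\kappa(T)=n$ and hence $l_3\le l_1+l_2-1$, one sets $s_v=1$ and $s_i=l_i$. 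The inequality $k\le\kappa(T)\le 2(l_1+l_2)$ is what drives the feasibility, and the main obstacle is running this case analysis uniformly across the whole range, particularly at the boundaries $k\in\{2,3\}$ and $k$ near $\kappa(T)$.
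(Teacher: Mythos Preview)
Your approach is correct and genuinely different from the paper's. The paper builds an explicit set $S_k$ by ordering the non-root vertices in layers by distance to $v$ (breaking ties by thread index) and taking the first $k$; the verification for the critical case $d(x,v)=d(y,v)$ is then a direct count of how many vertices of $S_k$ lie on the two relevant threads, split into the ranges $k\le 3l_1$ and $3l_1<k\le\min\{2l_1+2l_2,n\}$. You instead abstract the construction to a choice of counts $(s_1,s_2,s_3,s_v)$ subject to the pairwise lower bounds $s_i+s_j\ge\lceil k/2\rceil$, which is exactly the constraint forced by the critical pairs, and then argue feasibility. Your route makes transparent that only the number of $S$-vertices per thread matters (placement within a thread is irrelevant), at the cost of a feasibility check; the paper's explicit set sidesteps that check but obscures this structural point.

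For the record, the feasibility you flag as the ``main obstacle'' does go through cleanly. With $s_v=0$ and caps $c_i=\min(l_i,\lfloor k/2\rfloor)$, one checks $c_1+c_2+c_3\ge k$ by splitting on how many of the $l_i$ are below $\lfloor k/2\rfloor$: if none, the sum is $3\lfloor k/2\rfloor\ge k$; if only $l_1$, the sum is $l_1+2\lfloor k/2\rfloor\ge k$; if $l_1,l_2$, the sum is $l_1+l_2+\lfloor k/2\rfloor\ge\lceil k/2\rceil+\lfloor k/2\rfloor=k$ using $k\le 2(l_1+l_2)$; if all three, the sum is $n-1\ge k$ provided $k\le n-1$. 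The only leftover case is $k=n$, where $s_v=1$, $s_i=l_i$ works because $\kappa(T)=n$ forces $l_1+l_2\ge\lceil n/2\rceil$. So there is no genuine obstruction, just bookkeeping.
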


\begin{proof}
For $k=1,$ the weak $k$-metric dimension equals the classical (vertex) metric
dimension, so the result $\operatorname*{wdim}_{1}(T)=\dim(T)=2$ is already
known in the literature. Let us assume that $k\geq2.$ Proposition
\ref{Lemma_auxiliary} implies that $\operatorname*{wdim}_{k}(T)\geq k,$ hence
to prove the claim it is sufficient to establish that $\operatorname*{wdim}%
_{k}(T)\leq k.$ Let $v$ be the only root vertex of $T,$ and let $P^{(j)}%
=vv_{1}^{(j)}\cdots v_{l_{j}}^{(j)}$ be a thread hanging at $v$ for
$j\in\lbrack3].$ We define the function $f:V(T)\rightarrow\lbrack n]$ with
$f(v)=n$ and $f(v_{i_{1}}^{(j_{1})})<f(v_{i_{2}}^{(j_{2})})$ if and only if
$i_{1}<i_{2}$ or $i_{1}=i_{2}$ and $j_{1}<j_{2}.$ Notice that $f$ is a
bijection. For every $2\leq k\leq\kappa(T),$ we define a set $S_{k}=\{s\in
V(T):f(s)\leq k\}.$ Since $f$ is a bijection, we have $\left\vert
S_{k}\right\vert =k.$ To prove that $\operatorname*{wdim}_{k}(T)\leq k,$ it is
sufficient to show that $S_{k}$ is a weak $k$-resolving set.

Let $x,y\in V(T)$ be a pair of vertices of $T.$ If $d(x,y)$ is odd, then
$\Delta_{s}(x,y)\geq1$ for every $s\in V(T),$ so $\Delta_{S_{k}}%
(x,y)\geq\left\vert S_{k}\right\vert =k.$ So, let us assume that $d(x,y)$ is
even. If $d(x,v)\not =d(y,v),$ there exists at most one $s_{0}\in S_{k}$ such
that $\Delta_{s_{0}}(x,y)=0,$ and all other $s\in S_{k}$ satisfy $\Delta
_{s}(x,y)\geq2.$ Consequently, $\Delta_{S_{k}}(x,y)\geq2(k-1)\geq k$ for
$k\geq2.$

If $d(x,v)=d(y,v),$ then $x$ and $y$ belong to two distinct threads hanging at
the root $v,$ denote them by $P^{x}$ and $P^{y}$ respectively. Any $s\in
S_{k}$ which belongs to $P^{x}$ or $P^{y}$ satisfies $\Delta_{s}(x,y)\geq2,$
and any other $s\in S_{k}$ satisfies $\Delta_{s}(x,y)=0.$ We deduce that
\[
\Delta_{S_{k}}(x,y)\geq2\left\vert (V(P^{x})\cup V(P^{y}))\cap S_{k}%
\right\vert .
\]
Assume first that $k\leq3l_{1}.$ For $k=3p+q$, where $p\geq0$ is an integer
and $q\in\{0,1,2\},$ we have%
\[
\Delta_{S_{k}}(x,y)\geq\left\{
\begin{array}
[c]{cc}%
2(\left\lfloor k/3\right\rfloor +\left\lfloor k/3\right\rfloor ) & \text{if
}q\not =2,\\
2(\left\lceil k/3\right\rceil +\left\lfloor k/3\right\rfloor ) & \text{if
}q=2.
\end{array}
\right.
\]
Hence, we promptly obtain $\Delta_{S_{k}}(x,y)\geq k.$ Assume next that
$3l_{1}<k\leq\min\{2l_{1}+2l_{2},n\},$ then%
\begin{align*}
\Delta_{S_{k}}(x,y)  &  \geq2(2l_{1}+\left\lfloor (k-3l_{1})/2\right\rfloor
)\geq2(2l_{1}+(k-3l_{1}-1)/2)\\
&  =k+l_{1}-1\geq k.
\end{align*}
Thus, we have established that any pair $x,y\in V(T)$ satisfies $\Delta
_{S_{k}}(x,y)\geq k,$ which implies $S_{k}$ is a weak $k$-resolving set.
\end{proof}

To establish the weak $k$-metric dimension of all remaining trees, i.e. for
all trees $T\not =P_{n},S^{3}(l_{1},l_{2},l_{3}),$ we need the following
notation. Recall that given a root vertex $v\in V(T)$, the root degree of $v$
is denoted by $\ell(v),$ the lengths of threads hanging at $v$ are
$l_{1}(v)\leq\cdots\leq l_{\ell(v)}(v),$ and $P^{(j)}=vv_{1}^{(j)}\cdots
v_{l_{j}}^{(j)}$ is a thread hanging at $v$ for $j\in\lbrack\ell(v)].$
Finally, $R(T)$ denotes the set of all root vertices in a tree $T.$

For a root vertex $v$ of a tree $T$, let $x$ and $y$ be a pair of neighbors of
the root $v$ which belong to two different threads hanging at $v,$ say
$P^{(j)}$ and $P^{(k)}.$ Notice that $\Delta_{s}(x,y)=2$ if and only if
$s\in(V(P^{(j)})\cup V(P^{(k)}))\backslash\{v\},$ and $\Delta_{s}(x,y)=0$ for
any other $s\in V(T).$ Therefore, a necessary condition for $S_{k}\subseteq
V(G)$ to be a weak $k$-resolving set is that every pair of threads hanging at
a same root vertex $v$ contains at least $\left\lceil k/2\right\rceil $
vertices of $S_{k}$.

We denote by $S_{k}(v)$ the set of vertices from $S_{k}$ which belong to
threads hanging at $v.$ Since we want a smallest possible $S_{k}(v)$ such that
each pair of threads hanging at $v$ contains at least $\left\lceil
k/2\right\rceil $ vertices of $S_{k}(v)$, we will define $S_{k}(v)$ to contain
$\left\lceil k/4\right\rceil $ vertices of each thread. Notice that this is
only possible if a shortest thread hanging at $v$ contains at least
$\left\lceil k/4\right\rceil $ vertices, i.e. $\left\lceil k/4\right\rceil
\leq l_{1}(v),$ in which case we define
\[
S_{k}^{(1)}(v)=\{v_{i}^{(j)}:1\leq i\leq\left\lceil k/4\right\rceil ,1\leq
j\leq\ell(v)\}.
\]
Here, one has to be careful, since $S_{k}^{(1)}(v)$ contains $\left\lceil
k/4\right\rceil $ of each thread, this implies that every pair of threads
hanging at $v$ contains $2\left\lceil k/4\right\rceil $ vertices of
$S_{k}^{(1)}(v)$. It is easily verified that%
\[
2\left\lceil k/4\right\rceil =\left\{
\begin{array}
[c]{ll}%
\left\lceil k/2\right\rceil +1 & \text{if }k\equiv1,2\text{ }%
(\operatorname{mod}4),\\
\left\lceil k/2\right\rceil  & \text{if }k\equiv3,4\text{ }(\operatorname{mod}%
4).
\end{array}
\right.
\]
Hence, in the case of $k\equiv1,2$ $(\operatorname{mod}4)$, the set
$S_{k}^{(1)}(v)$ is not a smallest set with the desired property and we may
consider the set $S_{k}^{(1)}(v)\backslash\{v_{\left\lceil k/4\right\rceil
}^{(1)}\}$ instead.

If a shortest thread hanging at $v$ does not contain $\left\lceil
k/4\right\rceil $ vertices, i.e. $\left\lceil k/4\right\rceil >l_{1}(v),$ one
has to take more than $\left\lceil k/4\right\rceil $ vertices from longer
threads in order to obtain that every pair of threads hanging at $v$ contains
at least $\left\lceil k/2\right\rceil $ vertices of $S_{k}$. Namely, in this
case we define
\[
S_{k}^{(2)}(v)=\{v_{i}^{(1)}:i\in\lbrack l_{1}(v)]\}\cup\{v_{i}^{(j)}:1\leq
i\leq\left\lceil k/2\right\rceil -l_{1}(v),2\leq j\leq\ell(v)\}.
\]
It is obvious that in this case $P^{(1)}$ and $P^{(j)}$ contain precisely
$\left\lceil k/2\right\rceil $ vertices of $S_{k}^{(2)}(v),$ and $P^{(i)}$ and
$P^{(j)}$ for $i,j\geq2$ contain
\[
2(\left\lceil k/2\right\rceil -l_{1}(v))\geq2(\left\lceil k/2\right\rceil
-\left\lceil k/4\right\rceil +1)>\left\lceil k/2\right\rceil
\]
vertices of $S_{k}^{(2)}(v).$ We unite these definitions as follows
\[
S_{k}(v)=\left\{
\begin{array}
[c]{ll}%
S_{k}^{(1)}(v)\backslash\{v_{\left\lceil k/4\right\rceil }^{(1)}\} & \text{if
}\left\lceil k/4\right\rceil \leq l_{1}(v)\text{ and }k\equiv1,2\text{
}(\operatorname{mod}4),\\
S_{k}^{(1)}(v) & \text{if }\left\lceil k/4\right\rceil \leq l_{1}(v)\text{ and
}k\equiv3,4\text{ }(\operatorname{mod}4),\\
S_{k}^{(2)}(v) & \text{if }\left\lceil k/4\right\rceil >l_{1}(v).
\end{array}
\right.
\]
The set $S_{k}(v)$ is illustrated by Figure \ref{Fig_trees}.

In what follows we will establish that the union of $S_{k}(v)$ over all root
vertices $v$ is a smallest weak $k$-resolving set of a tree $T,$ but let us
first establish the cardinality of $S_{k}(v).$ For this purpose we define
\[
f_{k}(v)=\left\{
\begin{array}
[c]{ll}%
\left\lceil k/4\right\rceil \ell(v)-1 & \text{if }\left\lceil k/4\right\rceil
\leq l_{1}(v)\text{ and }k\equiv1,2\text{ }(\operatorname{mod}4),\\
\left\lceil k/4\right\rceil \ell(v) & \text{if }\left\lceil k/4\right\rceil
\leq l_{1}(v)\text{ and }k\equiv3,4\text{ }(\operatorname{mod}4),\\
l_{1}(v)+(\ell(v)-1)(\left\lceil k/2\right\rceil -l_{1}(v)) & \text{if
}\left\lceil k/4\right\rceil >l_{1}(v).
\end{array}
\right.
\]
and it is obvious that $\left\vert S_{k}(v)\right\vert =f_{k}(v).$ Finally,
for a tree $T\not =P_{n},S^{3}(l_{1},l_{2},l_{3})$ we define the set
\[
S_{k}(T)=%
{\textstyle\bigcup\nolimits_{v\in R(T)}}
S_{k}(v).
\]
In a situation where a tree $T$ is clear from the context, we will write only
$S_{k}.$ Since a pair of distinct root vertices $v,w\in R(T)$ satisfies
$S_{k}(v)\cap S_{k}(w)=\emptyset,$ we conclude that
\[
\left\vert S_{k}(T)\right\vert =%
{\textstyle\sum\nolimits_{v\in R(T)}}
f_{k}(v).
\]
Now, let us first formally establish that the set $S_{k}(T)$ has a desired
property that any two threads hanging at a same root vertex contain at least
$\left\lceil k/2\right\rceil $ vertices of $S_{k}(T).$

\begin{lemma}
\label{Obs_threads}Any two threads hanging at a same root vertex $v$ of a tree
$T\not =P_{n},S^{3}(l_{1},l_{2},l_{3})$ contain at least $\left\lceil
k/2\right\rceil $ vertices of the set $S_{k}(T).$ Moreover, the bound
$\left\lceil k/2\right\rceil $ is attained if at least one of the two threads
is a shortest thread $P^{(1)}$ hanging at $v.$
\end{lemma}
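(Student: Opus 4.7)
The plan is to fix a root vertex $v \in R(T)$ and any two threads $P^{(a)}, P^{(b)}$ hanging at $v$, and to count $|S_k(T) \cap (V(P^{(a)}) \cup V(P^{(b)}))|$ by a short case analysis on the three branches of the definition of $S_k(v)$.

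The first step is to reduce the count to a purely local computation at $v$. Every non-root vertex on a thread at $v$ has degree at most $2$, hence it is not a root vertex and not a thread-vertex of any other root $w \in R(T)$; moreover $v$ itself is excluded from $S_k(v)$ because the indices $i$ in the definitions of both $S_k^{(1)}(v)$ and $S_k^{(2)}(v)$ start at $1$, and $v \notin S_k(w)$ for $w \neq v$ since $v$ has degree at least $3$. Therefore $S_k(T) \cap (V(P^{(a)}) \cup V(P^{(b)})) = S_k(v) \cap (V(P^{(a)}) \cup V(P^{(b)}))$, and the lemma reduces to counting vertices per thread inside $S_k(v)$.

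I would then split into the three definitional cases. When $\lceil k/4 \rceil \leq l_1(v)$ and $k \equiv 3,4 \pmod 4$, each thread contributes exactly $\lceil k/4 \rceil$ vertices, and by the recorded identity $2\lceil k/4 \rceil = \lceil k/2 \rceil$ every pair of threads meets the bound. When $\lceil k/4 \rceil \leq l_1(v)$ and $k \equiv 1,2 \pmod 4$, the removed vertex $v_{\lceil k/4 \rceil}^{(1)}$ lies on $P^{(1)}$, so $P^{(1)}$ contributes $\lceil k/4 \rceil - 1$ and every other thread contributes $\lceil k/4 \rceil$; using $2\lceil k/4 \rceil = \lceil k/2 \rceil + 1$, a pair containing $P^{(1)}$ yields exactly $\lceil k/2 \rceil$, while a pair avoiding $P^{(1)}$ yields $\lceil k/2 \rceil + 1$. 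When $\lceil k/4 \rceil > l_1(v)$, the shortest thread $P^{(1)}$ contributes $l_1(v)$ vertices and each other thread contributes $\lceil k/2 \rceil - l_1(v)$; a pair containing $P^{(1)}$ totals $l_1(v) + (\lceil k/2 \rceil - l_1(v)) = \lceil k/2 \rceil$ exactly, whereas a pair of two longer threads totals $2(\lceil k/2 \rceil - l_1(v)) > \lceil k/2 \rceil$ by the estimate $l_1(v) \leq \lceil k/4 \rceil - 1$ carried out just before the lemma.

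The only real obstacle is the ceiling-arithmetic bookkeeping modulo $4$, and this has been pre-packaged into the two identities recorded immediately above the definition of $S_k(v)$, so no new calculation is needed. One small auxiliary check is that in the third case the value $\lceil k/2 \rceil - l_1(v)$ does not exceed $l_j(v)$ for any $j \geq 2$; this follows from $k \leq \kappa(T) \leq \kappa^{\ast}(T) \leq 2(l_1(v) + l_2(v))$, which guarantees that $S_k(v)$ is well-defined on every longer thread and that the counts above genuinely represent sizes of subsets of the respective threads.
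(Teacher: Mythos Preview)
Your proof is correct and follows essentially the same three-case analysis on the definition of $S_k(v)$ as the paper. You add two small justifications the paper leaves implicit---the reduction of $S_k(T)\cap(V(P^{(a)})\cup V(P^{(b)}))$ to $S_k(v)$, and the check that $\lceil k/2\rceil - l_1(v)\le l_j(v)$ for $j\ge 2$ via $k\le\kappa^\ast(T)\le 2(l_1(v)+l_2(v))$---but otherwise the argument is identical.
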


\begin{proof}
From the definition of $S_{k}(v)$ it follows that in the case of $\left\lceil
k/4\right\rceil \leq l_{1}(v)$ and $k\equiv1,2$ $(\operatorname{mod}4),$ every
pair of threads contains at least $2\left\lceil k/4\right\rceil -1=\left\lceil
k/2\right\rceil $ vertices of $S_{k}(v)$ and therefore of $S_{k}(T).$ The
bound is attained if one of the two threads is $P^{(1)},$ and we can choose
the notation of threads so that $P^{(1)}.$

In the case of $\left\lceil k/4\right\rceil \leq l_{1}(v)$ and $k\equiv3,4$
$(\operatorname{mod}4),$ every pair of threads contains precisely
$2\left\lceil k/4\right\rceil =\left\lceil k/2\right\rceil $ vertices of
$S_{k}(v)\subseteq S_{k}(T),$ so the bound is attained for every pair of
threads hanging at $v.$

Finally, in the case of $\left\lceil k/4\right\rceil >l_{1}(v),$ every pair of
threads which includes $P^{(1)}$ contains $l_{1}(v)+\left\lceil
k/2\right\rceil -l_{1}(v)=\left\lceil k/2\right\rceil $ vertices of
$S_{k}(v),$ and otherwise it contains $2(\left\lceil k/2\right\rceil
-l_{1}(v))$ vertices of $S_{k}(v)$ and we already established that
$2(\left\lceil k/2\right\rceil -l_{1}(v))>\left\lceil k/2\right\rceil $ in
this case.
\end{proof}

The above lemma implies the following result.

\begin{lemma}
\label{Cor_cardinality}If $T\not =P_{n},S^{3}(l_{1},l_{2},l_{3})$ is a tree on
$n$ vertices, then $\left\vert S_{k}(T)\right\vert \geq k.$
\end{lemma}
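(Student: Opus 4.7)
The proof splits naturally according to the size of $R(T)$. The starting point is a uniform lower bound on each root's contribution: for every root vertex $v$, Lemma~\ref{Obs_threads} applied to any two threads hanging at $v$ shows that those threads jointly contain at least $\lceil k/2\rceil$ vertices of $S_k(T)$. Since by construction $S_k(T)\cap(\text{threads at }v)=S_k(v)$ (threads hang only at their own root), this yields the uniform estimate $f_k(v)=|S_k(v)|\geq\lceil k/2\rceil$.

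If $T$ contains at least two root vertices, disjointness of the sets $S_k(v)$ over distinct roots (threads at distinct roots are vertex-disjoint) gives
\[
|S_k(T)|=\sum_{v\in R(T)} f_k(v)\geq 2\lceil k/2\rceil\geq k,
\]
and the lemma follows.

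The remaining case is $|R(T)|=1$, say $R(T)=\{v\}$. Here I first claim that $T$ must be a spider, i.e.\ $T=S^{\ell}(l_1,\ldots,l_\ell)$. Indeed, if $T$ contained a second vertex of degree at least $3$, then rooting $T$ at $v$ and picking a branching vertex $u^{\ast}$ at maximum distance from $v$ produces a vertex in whose \emph{descendant} subtrees no further branching occurs; each of its $\geq 2$ children therefore initiates a pendant path, giving $\geq 2$ threads hanging at $u^{\ast}$ and making $u^{\ast}$ a second root, a contradiction. Combined with $T\neq P_n$ and $T\neq S^{3}(l_1,l_2,l_3)$, this forces $\ell(v)\geq 4$. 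The argument then finishes by checking the three branches in the definition of $f_k(v)$ under $\ell(v)\geq 4$: in the two cases with $\lceil k/4\rceil\leq l_1(v)$ the bound reduces to $4\lceil k/4\rceil-1\geq k$ or $4\lceil k/4\rceil\geq k$, which both hold once the parity conditions built into those cases are used; in the case $\lceil k/4\rceil>l_1(v)$, the bound $f_k(v)\geq l_1(v)+3(\lceil k/2\rceil-l_1(v))$ together with $l_1(v)\leq\lceil k/4\rceil-1$ gives $f_k(v)\geq k$.

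The main obstacle is the structural sub-claim that a tree with a unique root vertex must be a spider; once this is granted, everything else is either an immediate invocation of Lemma~\ref{Obs_threads} or a routine check of the arithmetic already encoded in the case split defining $f_k(v)$.
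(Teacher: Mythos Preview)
Your proof is correct and follows the same overall structure as the paper's: split according to $|R(T)|$, and in the multi-root case use Lemma~\ref{Obs_threads} to get $|S_k(v)|\geq\lceil k/2\rceil$ for each root and sum. In the single-root case the paper takes a slightly slicker route: once $\ell(v)\geq 4$, it simply invokes Lemma~\ref{Obs_threads} on two \emph{disjoint} pairs of threads at $v$ to obtain $|S_k(v)|\geq 2\lceil k/2\rceil\geq k$, avoiding your three-case arithmetic on $f_k(v)$. On the other hand, your explicit justification that a tree with a unique root vertex must be a spider (via the ``farthest branching vertex'' argument) is a structural fact the paper uses but does not prove; in that respect your argument is the more complete one.
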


\begin{proof}
Notice that tree $T$ contains at least one root vertex, since $T\not =P_{n}.$
If $T$ contains precisely one root vertex $v$, then $T\not =S^{3}(l_{1}%
,l_{2},l_{3})$ implies $\ell(v)\geq4.$ Thus, $T$ contains at least two pairs
of threads hanging at the same root vertex, so according to Lemma
\ref{Obs_threads} we have $\left\vert S_{k}(T)\right\vert =\left\vert
S_{k}(v)\right\vert \geq k.$ If $T$ contains at least two root vertices, say
$v$ and $w,$ each of them by definition has at least two threads hanging at
it. Lemma \ref{Obs_threads} then implies $\left\vert S_{k}(T)\right\vert
\geq\left\vert S_{k}(v)\right\vert +\left\vert S_{k}(w)\right\vert \geq k.$
\end{proof}

So far we have defined the set $S_{k}(T)$ which certainly weakly $k$-resolves
each pair of vertices $x,y$ which are neighbors of a root $v$ of $T$ and
belong to two distinct threads hanging at $v.$ In the next theorem we will
establish that such a set also weakly $k$-resolves every pair of vertices in
$T,$ i.e. that it is a weak $k$-resolving set. Moreover, we will show that it
is a weak $k$-metric basis, obtaining thus the weak $k$-metric dimension of
$T.$

\begin{theorem}
If $T\not =P_{n},S^{3}(l_{1},l_{2},l_{3})$ is a tree on $n$ vertices, then
\[
\operatorname*{wdim}\nolimits_{k}(T)=%
{\textstyle\sum\nolimits_{v\in R(T)}}
f_{k}(v).
\]
Moreover, the set $S_{k}(T)$ is a weak $k$-metric basis of $T$ for $1\leq
k\leq\kappa(T).$
\end{theorem}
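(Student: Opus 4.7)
The plan is to prove simultaneously two inequalities: $\operatorname{wdim}_{k}(T)\leq |S_{k}(T)|$ (by exhibiting $S_{k}(T)$ as a weak $k$-resolving set) and $\operatorname{wdim}_{k}(T)\geq \sum_{v\in R(T)} f_{k}(v)$ (by a local counting argument at each root). Since $|S_{k}(T)|=\sum_{v\in R(T)} f_{k}(v)$ has already been established, together these give the formula and show $S_{k}(T)$ is a weak $k$-metric basis.

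For the upper bound, fix $x,y\in V(T)$ and let $P=u_{0}\cdots u_{d}$ be the connecting path, with $T_{i}$ the component of $T-E(P)$ containing $u_{i}$. By Lemma \ref{Lemma_Riste},
\[
\Delta_{S_{k}(T)}(x,y)=\sum_{i=0}^{d}|d-2i|\cdot|V(T_{i})\cap S_{k}(T)|.
\]
When $d$ is odd, every coefficient is at least $1$, so Lemma \ref{Cor_cardinality} gives $\Delta_{S_{k}(T)}(x,y)\geq |S_{k}(T)|\geq k$. When $d$ is even, only the middle term $i=d/2$ has vanishing coefficient and the others are at least $2$. Imitating the scheme of the proof determining $\kappa(T)$, I would let $i^{*}\leq d/2$ be the smallest index such that $V(T_{i^{*}})$ contains a root of $T$. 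If $i^{*}<d/2$, then $\bigcup_{i\leq i^{*}} V(T_{i})$ already contains vertices of two threads hanging at a common root, hence by Lemma \ref{Obs_threads} at least $\lceil k/2\rceil$ vertices of $S_{k}(T)$, each carried by a coefficient $\geq 2$. If $i^{*}=d/2$, then $u_{d/2}$ is a root $v$ with $x,y$ on distinct threads at $v$, and Lemma \ref{Obs_threads} again gives $\lceil k/2\rceil$ vertices of $S_{k}(T)$ on those two threads with weight $\geq 2$. In either case $\Delta_{S_{k}(T)}(x,y)\geq 2\lceil k/2\rceil\geq k$.

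For the lower bound, let $S$ be any weak $k$-resolving set. For each $v\in R(T)$ and each pair of threads $P^{(i)},P^{(j)}$ at $v$, the neighbors $x\in P^{(i)}$, $y\in P^{(j)}$ of $v$ satisfy $\Delta_{s}(x,y)=2$ precisely for $s\in (V(P^{(i)})\cup V(P^{(j)}))\setminus\{v\}$ and $0$ elsewhere, so the condition $\Delta_{S}(x,y)\geq k$ forces
\[
|S\cap (V(P^{(i)})\cup V(P^{(j)}))\setminus\{v\}|\geq \lceil k/2\rceil.
\]
Writing $a_{j}=|S\cap V(P^{(j)})\setminus\{v\}|$, this says every pair-sum $a_{i}+a_{j}\geq\lceil k/2\rceil$, subject to $0\leq a_{j}\leq l_{j}(v)$. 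A case split along the definition of $f_{k}(v)$ then shows $\sum_{j} a_{j}\geq f_{k}(v)$: in the regime $\lceil k/4\rceil\leq l_{1}(v)$, a balanced allocation $a_{j}=\lceil k/4\rceil$ is forced up to a single-vertex parity saving on the shortest thread when $k\equiv 1,2\pmod 4$; in the regime $\lceil k/4\rceil>l_{1}(v)$, the constraint $a_{1}+a_{j}\geq\lceil k/2\rceil$ combined with $a_{1}\leq l_{1}(v)$ forces $a_{j}\geq\lceil k/2\rceil-l_{1}(v)$ for $j\geq 2$, with $a_{1}=l_{1}(v)$ optimal. Since threads hanging at distinct roots are vertex-disjoint, summing yields $|S|\geq\sum_{v\in R(T)} f_{k}(v)$.

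The main obstacle will be the combinatorial minimization inside the lower bound: formally verifying that $f_{k}(v)$ is tight in each of the three regimes of its definition and, in particular, that the $-1$ correction when $\lceil k/4\rceil\leq l_{1}(v)$ and $k\equiv 1,2\pmod 4$ cannot be improved. The upper-bound argument is essentially a refinement of the proof determining $\kappa(T)$, enabled by Lemmas \ref{Lemma_Riste} and \ref{Obs_threads}, so I expect the harder conceptual work to be in the optimization and in carefully reconciling the boundary cases of the definition of $S_{k}(v)$ with the extremal pair $x,y$ analysis.
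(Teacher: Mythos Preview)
Your proposal is correct and follows essentially the same approach as the paper: the upper-bound argument (showing $S_k(T)$ is a weak $k$-resolving set via Lemma~\ref{Lemma_Riste}, the odd/even split on $d$, and the index $i^*$ locating the first root) is identical to the paper's. For the lower bound the paper also reduces to the constraint that every pair of threads at a root must carry at least $\lceil k/2\rceil$ vertices of $S$, but it packages this as a short contradiction (if $|S|<|S_k|$ some pair falls below $\lceil k/2\rceil$) rather than your explicit per-root optimization $\sum_j a_j\ge f_k(v)$; your route is a bit more work but makes rigorous precisely the step the paper leaves terse.
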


\begin{proof}
Since any root vertex $v$ of $T$ satisfies $\left\vert S_{k}(v)\right\vert
=f_{k}(v),$ and $S_{k}(T)$ is a union of pairwise vertex disjoint sets
$S_{k}(v)$ over all root vertices $v$ of $T,$ we have $\left\vert
S_{k}(T)\right\vert =%
{\textstyle\sum\nolimits_{v\in R(T)}}
f_{k}(v).$ Hence, it is sufficient to prove that $S_{k}(T)$ is a weak
$k$-metric basis of $T$ for $1\leq k\leq\kappa(T).$

Let us first establish that $S_{k}=S_{k}(T)$ is a weak $k$-resolving set of
$T.$ Let $x,y\in T$ be any pair of vertices at distance $d=d(x,y),$ and let
$P=u_{0}u_{1}\cdots u_{d}$ be the shortest path connecting $x$ and $y,$ where
$x=u_{0}$ and $y=u_{d}.$ If $T_{i}$ is a component of $T-E(P)$ which contains
$u_{i},$ Lemma \ref{Lemma_Riste} implies%
\[
\Delta_{S_{k}}(x,y)=\sum_{i=0}^{d}\left\vert d-2i\right\vert \left\vert
V(T_{i})\cap S_{k}\right\vert .
\]
If $d$ is odd, then $\left\vert d-2i\right\vert \geq1$ for any integer $i,$ so
Lemma \ref{Cor_cardinality} implies $\Delta_{S_{k}}(x,y)\geq\sum_{i=0}%
^{d}\left\vert V(T_{i})\cap S_{k}\right\vert =\left\vert S_{k}\right\vert \geq
k$. Hence, let us assume that $d$ is even. Notice that $\left\vert
d-2i\right\vert =0$ if and only if $i=d/2,$ and all other integers $i$ satisfy
$\left\vert d-2i\right\vert \geq2.$ Let $i^{\ast}\in\{0,\ldots,d\}$ be the
smallest integer such that $V(T_{i})$ contains a root vertex $v$ of $T.$ We
may assume that vertices $u_{i}$ of $P$ are denoted so that $i^{\ast}\leq
d/2.$

If $i^{\ast}<d/2,$ then the set $\bigcup\nolimits_{i=0}^{i^{\ast}}V(T_{i})$
contains all vertices of at least two threads hanging at $v.$ Hence, according
to Lemma \ref{Obs_threads} we have $\Delta_{S_{k}}(x,y)\geq2\sum
_{i=0}^{i^{\ast}}\left\vert V(T_{i})\cap S_{k}\right\vert \geq k.$ If
$i^{\ast}=d/2,$ then $u_{d/2}$ is a root vertex of $T$ and $x$ and $y$ belong
to two distinct threads hanging at $u_{d/2}.$ If we denote $I=\{0,\ldots
,d\}\backslash\{d/2\},$ the set $\bigcup\nolimits_{i\in I}V(T_{i})$ contains
all vertices of the two threads hanging at $u_{d/2}$ containing $x$ and $y,$
except the root $u_{d/2}$ itself. Lemma \ref{Obs_threads} again implies
$\Delta_{S_{k}}(x,y)\geq2\sum_{i\in I}\left\vert V(T_{i})\cap S_{k}\right\vert
\geq k.$

Hence, we have proven that $S_{k}$ is a weak $k$-resolving set of $T.$ It
remains to prove that $S_{k}$ is a smallest such set. Suppose to the contrary
that there exists a weak $k$-resolving set $S$ of $T$ such that $\left\vert
S\right\vert <\left\vert S_{k}\right\vert .$ Since $S_{k}$ contains only
vertices which belong to threads hanging at root vertices, the assumption
$\left\vert S\right\vert <\left\vert S_{k}\right\vert $ implies that there
exists a root vertex $v$ in $T$ and a pair of threads $P_{1}$ and $P_{2}$
hanging at $v$ such that $V(P_{1})\cup V(P_{2})$ contains less vertices of $S$
than of $S_{k}$ and $P_{1}$ is a shortest thread hanging at $v.$ Lemma
\ref{Obs_threads} implies
\[
\left\vert (V(P_{1})\cup V(P_{2}))\cap S\right\vert \leq\left\vert
(V(P_{1})\cup V(P_{2}))\cap S_{k}\right\vert -1=\left\lceil k/2\right\rceil
-1<k/2.
\]
Let $x$ and $y$ be vertices of $P_{1}$ and $P_{2}$, respectively, adjacent to
$v.$ Notice that $\Delta_{s}(x,y)=2$ for any $s\in S$ contained in $P_{1}$ or
$P_{2},$ otherwise $\Delta_{s}(x,y)=0.$ Hence, $\Delta_{S}(x,y)<2k/2=k,$ which
contradicts $S$ being a weak $k$-resolving set.
\end{proof}

\subsection{Grid graphs}

It remains to consider grid graphs, for which we also established weak
dimensionality in Section \ref{sec:kappa}. Recall that $\kappa(P_{q}\Box
P_{r})=2q+2r-4$ according to Theorem \ref{Prop_grid_kapa}. The weak $k$-metric
dimension of any grid graph $P_{q}\Box P_{r}$ is given by the following theorem.

\begin{figure}[ptbh]
\begin{center}
\includegraphics[scale=0.6]{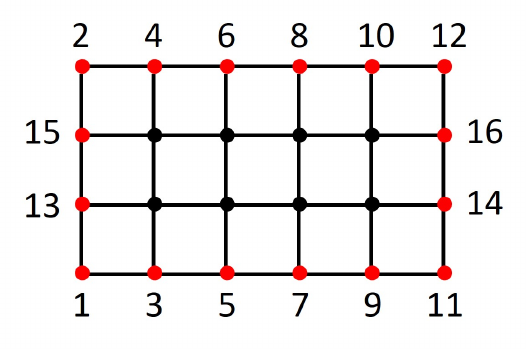}
\end{center}
\caption{The figure shows the grid graph $P_{6}\Box P_{4}$ and the values of
the $f(v)$ for each vertex $v$ from the border $R$ of the grid. Hence, in this
grid we have for example $S_{3}=S_{4}=\{v\in R:f(v)\leq4\}$ and $S_{5}%
=S_{6}=\{v\in R:f(v)\leq6\}.$}%
\label{Fig_grid_f}%
\end{figure}

\begin{theorem}
If $q,r\geq2$ are integers, then
\[
\operatorname*{wdim}\nolimits_{k}(P_{q}\Box P_{r})=\left\{
\begin{array}
[c]{ll}%
k & \mbox{if $k$ is even},\\
k+1 & \mbox{if $k$ is odd},
\end{array}
\right.
\]
for every $1\leq k\leq\kappa(P_{q}\Box P_{r})=2q+2r-4.$
\end{theorem}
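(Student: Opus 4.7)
The plan is to establish matching lower and upper bounds on $\operatorname{wdim}_k(P_q\Box P_r)$, with the lower bound requiring a parity refinement in the odd-$k$ case and the upper bound coming from an explicit symmetric construction on the border $R$ of the grid (the $2q+2r-4$ vertices used implicitly in Theorem \ref{Prop_grid_kapa}).

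For the lower bound, Proposition \ref{Lemma_auxiliary} immediately gives $\operatorname{wdim}_k\geq k$. To sharpen this to $\operatorname{wdim}_k\geq k+1$ when $k$ is odd, I would combine four symmetric ``corner-diagonal'' pairs: the pair $x=(u_1,v_2),y=(u_2,v_1)$ used in the proof of Theorem \ref{Prop_grid_kapa}, together with its three rotations at the other corners. A direct computation (entirely parallel to that proof) shows that for each such pair the only vertices $s$ with $\Delta_s(x,y)\neq 0$ are those on the two border sides incident to the corner, excluding the corner itself, and each such $s$ contributes $\Delta_s(x,y)=2$. Denote the four resulting ``L-shaped'' subsets of $R$ by $T_1,T_2,T_3,T_4$. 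The weak $k$-resolving property forces $|S\cap T_i|\geq\lceil k/2\rceil$ for each $i\in[4]$. A quick case check shows that every border vertex (corner or not) belongs to exactly two of the $T_i$, while interior vertices belong to none, so summing the four inequalities gives $2|S\cap R|\geq 4\lceil k/2\rceil$, hence $|S|\geq|S\cap R|\geq 2\lceil k/2\rceil$, which equals $k$ for even $k$ and $k+1$ for odd $k$.

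For the upper bound, I would construct $S_k\subseteq R$ of cardinality $k$ (even $k$) or $k+1$ (odd $k$), distributed symmetrically around the four corners so that $|S_k\cap T_i|\geq\lceil k/2\rceil$ for every $i$. Concretely, one orders the border by the function $f$ illustrated in Figure \ref{Fig_grid_f}, assigning $f$-values in blocks of four—one vertex near each corner—so that equality is maintained in the double-count of the previous paragraph; then $S_k=\{v\in R:f(v)\leq k'\}$ with $k'=k$ or $k'=k+1$ accordingly. To verify that $S_k$ is a weak $k$-resolving set, I split on the parity of $d(x,y)$. If $d(x,y)$ is odd, Lemma \ref{Lemma_riste2} gives $\Delta_s(x,y)\geq 1$ for every $s$, so $\Delta_{S_k}(x,y)\geq|S_k|\geq k$. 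If $d(x,y)$ is even, then $\Delta_{S_k}(x,y)$ is automatically even, and the task is to show it is at least $k$ (hence $\geq k+1$ when $k$ is odd), which is exactly matched by $|S_k|=k+1$.

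The main obstacle is the even-distance case of the upper-bound verification. Here the ``bad set'' $S_0(x,y)=\{s:\Delta_s(x,y)=0\}$ has a diagonal shape depending on the pair (as described in the proof of Theorem \ref{Prop_grid_kapa}), and one must show that $S_k$ always retains enough vertices in $R\setminus S_0(x,y)$, each contributing $2$, to reach the required threshold. The symmetric placement across the four corners is designed for precisely this, but identifying the worst-case even-distance pair and checking it uniformly in $k$ is delicate, especially since the tight value $\kappa=2q+2r-4$ suggests very little slack. Additional care will be needed for the degenerate small-grid cases $q=2$ or $r=2$, where two border sides collapse and the $T_i$ overlap more heavily than in the generic situation.
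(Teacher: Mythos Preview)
Your lower bound is correct and is essentially the paper's argument: the four corner-diagonal pairs, each resolved only by an L-shaped set $T_i\subseteq R$ with every contribution equal to $2$, force $|S\cap T_i|\geq\lceil k/2\rceil$. Your double-count (each border vertex lies in exactly two $T_i$) is a slightly cleaner packaging of what the paper does via inclusion--exclusion with corner indicators $\lambda_{i,j}$, but the content is identical.

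The genuine gap is in your upper bound, specifically the even-distance verification that you flag as ``delicate.'' You propose to place $S_k$ in blocks of four, one near each corner, so as to balance the four $T_i$; you then hope the four-corner symmetry will somehow handle an arbitrary even-distance pair and anticipate a worst-case analysis. This is not the right organizing principle, and no worst-case search is needed. The paper's construction is not four-corner symmetric at all: the function $f$ pairs \emph{opposite} border vertices, listing $(u_i,v_1),(u_i,v_r)$ together for $i=1,2,\dots$ and then $(u_1,v_j),(u_q,v_j)$ together for $j=2,\dots,r-1$, so that $S_k$ is always a union of $\lceil k/2\rceil$ such antipodal pairs. The point is that the proof of Theorem~\ref{Prop_grid_kapa} already showed that $S_0(x,y)$ contains at most one vertex from each pair $\{(u_i,v_1),(u_i,v_r)\}$ and from each pair $\{(u_1,v_j),(u_q,v_j)\}$ (and in the $\Delta j=0$ case the unique bad pair has index $i^*\geq 2$, hence is not the pair $\{(u_1,v_1),(u_1,v_r)\}$ that $S_k$ always contains). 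Since $S_k$ contains both or neither of each such pair, at least half of $S_k$ lies outside $S_0(x,y)$, and every such vertex contributes $\geq 2$; thus $\Delta_{S_k}(x,y)\geq 2\cdot|S_k|/2=|S_k|\geq k$ in one line, uniformly in $x,y$ and $k$. Your four-corner design criterion $|S_k\cap T_i|\geq\lceil k/2\rceil$ only controls the four specific corner pairs and does not by itself imply this; the missing idea is to build $S_k$ from antipodal border pairs rather than from corner neighborhoods.
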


\begin{proof}
Notice that $2\left\lceil k/2\right\rceil =k$ for even $k,$ and $2\left\lceil
k/2\right\rceil =k+1$ for odd $k.$ Hence, it is sufficient to find a weak
$k$-metric basis $S$ of $P_{q}\Box P_{r}$ with $\left\vert S\right\vert
=2\left\lceil k/2\right\rceil .$ In order to find such a set $S,$ we first
need several notions.

Let $R$ be the set of border vertices of the grid $G,$ i.e. let
\begin{align*}
R_{1} &  =\{(u_{i},v_{1}),(u_{i},v_{r}):i\in\lbrack q]\},\\
R_{2} &  =\{(u_{1},v_{j}),(u_{q},v_{j}):j\in\lbrack r-1]\backslash\{1\}\},
\end{align*}
and $R=R_{1}\cup R_{2}.$ Notice that $\left\vert R\right\vert =2q+2r-4.$ We
define a function $f:R\rightarrow\lbrack2q+2r-4]$ by requiring that
$f(x)<f(y)$ for every $x\in R_{1}$ and $y\in R_{2},$ and on $R_{1}$ by
\[
f(u_{i},v_{1})<f(u_{i},v_{r})<f(u_{i+1},v_{1})<f(u_{i+1},v_{r})
\]
for every $i\in\lbrack q-1],$ and also on $R_{2}$ by
\[
f(u_{1},v_{j})<f(u_{q},v_{j})<f(u_{1},v_{j+1})<f(u_{q},v_{j+1})
\]
for every $j\in\{2,\ldots,r-2\}.$ The function $f$ is illustrated by Figure
\ref{Fig_grid_f}. Notice that $f$ is a uniquely defined bijection. For every
$k\leq2q+2r-4,$ we define a set $S_{k}=\{x\in R:f(x)\leq2\left\lceil
k/2\right\rceil \}.$ Since $f$ is bijection, we have $\left\vert
S_{k}\right\vert =2\left\lceil k/2\right\rceil .$ We have to show that $S_{k}$
is a weak $k$-metric basis of $P_{q}\Box P_{r}.$

Let us first prove that $S_{k}$ is a weak $k$-resolving set of $P_{q}\Box
P_{r}.$ For any pair of vertices $x,y\in V(G)$ such that $d(x,y)$ is odd,
Lemma \ref{Lemma_riste2} implies $\Delta_{S_{k}}(x,y)\geq\left\vert
S_{k}\right\vert \geq k.$ So, let us assume that $d(x,y)$ is even. Let
$x=(u_{i_{1}},v_{j_{1}})$ and $y=(u_{i_{2}},v_{j_{2}}),$ and we denote $\Delta
i=i_{2}-i_{1}$ and $\Delta j=j_{2}-j_{1}$ where we may assume $0\leq\Delta
j\leq\Delta i.$ Similarly as in the proof of Theorem \ref{Prop_grid_kapa}, we
define the set $S_{0}(x,y)$ to consist of all vertices $s$ of the grid such
that $\Delta_{s}(x,y)=0$ (see Figure \ref{Fig_S0}).

Let us establish the property of $S_{k}$ that at least half of its vertices
are not contained in $S_{0}(x,y).$ Assume first that $\Delta j=0.$ Recall from
the proof of Theorem \ref{Prop_grid_kapa} that in this case the set
$S_{0}(x,y)$ contains only two vertices from the border $R,$ namely
$(u_{i},v_{1})$ and $(u_{i},v_{r})$ for $i=i_{1}+\Delta i/2,$ see Figure
\ref{Fig_S0}.a). Notice that $\Delta j=0$ together with $d(x,y)$ being even
imply $\Delta i\geq2,$ so $i=i_{1}+\Delta i/2\geq2.$ Since $S_{k}$ contains
vertices $(u_{1},v_{1})$ and $(u_{1},v_{r}),$ we conclude that the property
holds. Assume next that $\Delta j>0.$ Recall again from the proof of Theorem
\ref{Prop_grid_kapa} that (in this case) the set $S_{0}(x,y)$ contains at most
one vertex of the pair $(u_{i},v_{1})$ and $(u_{i},v_{r}),$ and the same holds
for the pair $(u_{1},v_{j})$ and $(u_{q},v_{j}),$ which is also evident from
Figures \ref{Fig_S0}.b) and \ref{Fig_S0}.c). Since $S_{k}$ contains either
none or both vertices of these pairs, we again conclude that at least half of
the vertices of $S_{k}$ are not contained in $S_{0}(x,y),$ and the property is established.

Further, $d(x,y)$ being even implies $\Delta_{s}(x,y)\geq2$ for every $s\in
V(G)\backslash S_{0}(x,y).$ Combining this with the fact that at least half of
the vertices of $S_{k}$ are not contained in $S_{0}(x,y),$ we obtain
$\Delta_{S_{k}}(x,y)\geq2\left\vert S_{k}\right\vert /2\geq k.$ This
establishes that $S_{k}$ is a weak $k$-resolving set.

It remains to prove that $S_{k}$ is a smallest such set. In the case of even
$k,$ this is a consequence of Proposition \ref{Lemma_auxiliary}. In the case
of odd $k,$ we have to show that a set $S\subseteq V(G)$ such that $\left\vert
S\right\vert =k$ cannot be a weak $k$-resolving set. So, let $S$ be such a
set, and let us define four subsets of the border $R$ of a grid $G$ as follows%
\begin{align*}
R_{1,1} &  =\{(u_{i},v_{1}),(u_{1},v_{j}):i>1\text{ and }j>1\},\\
R_{q,1} &  =\{(u_{i},v_{1}),(u_{q},v_{j}):i<q\text{ and }j>1\},\\
R_{1,r} &  =\{(u_{i},v_{r}),(u_{1},v_{j}):i>1\text{ and }j<r\},\\
R_{q,r} &  =\{(u_{i},v_{r}),(u_{q},v_{j}):i<q\text{ and }j<r\}.
\end{align*}
Notice that for $(i,j)\in\{(1,1),(q,1),(1,r),(q,r)\},$ the pair $x,y$ of
neighbors of $(u_{i},v_{j})$ is distinguished only by vertices of $R_{i,j},$
moreover $\Delta_{s}(x,y)=2$ for every $s\in R_{i,j}.$ Hence, if any of the
sets $R_{i,j}$ contains less than $k/2$ vertices of $S,$ then $\Delta
_{S}(x,y)<2k/2=k,$ which implies $S$ is not a weak $k$-resolving set. So, let
us assume that sets $R_{i,j}$ contain at least $k/2$ vertices of $S$. Since
$k$ is odd and the number of vertices integer, this further implies that
$\left\vert R_{i,j}\cap S\right\vert \geq(k+1)/2.$ Let us define
$\lambda_{i,j}=1$ if $(u_{i},v_{j})\in S,$ and $\lambda_{i,j}=0$ otherwise. We
have%
\begin{align*}
\left\vert S\right\vert  &  =\left\vert R_{1,1}\cap S\right\vert +\left\vert
R_{q,r}\cap S\right\vert +\lambda_{1,1}+\lambda_{q,r}-\lambda_{1,r}%
-\lambda_{q,1}\\
&  \geq2\frac{k+1}{2}+\lambda_{1,1}+\lambda_{q,r}-\lambda_{1,r}-\lambda_{q,1}.
\end{align*}
Similarly, we have%
\begin{align*}
\left\vert S\right\vert  &  =\left\vert R_{1,r}\cap S\right\vert +\left\vert
R_{q,1}\cap S\right\vert +\lambda_{1,r}+\lambda_{q,1}-\lambda_{1,1}%
-\lambda_{q,r}\\
&  \geq2\frac{k+1}{2}+\lambda_{1,r}+\lambda_{q,1}-\lambda_{1,1}-\lambda_{q,r}.
\end{align*}
We conclude that $2\left\vert S\right\vert \geq2(k+1),$ i.e. $\left\vert
S\right\vert \geq k+1,$ a contradiction.
\end{proof}

\section{Concluding remarks and further work}

In this paper we generalize the classical notion of metric dimension by
introducing the weak $k$-metric dimension. In the classical metric dimension,
a set $S$ is a resolving set if every pair of vertices $x,y\in V(G)$ is
distinguished by at least one $s\in S.$ This can be written as $\sum_{s\in
S}\left\vert d(x,s)-d(y,s)\right\vert \geq1.$ Sometimes, this distinguishing
may not be strong enough, so the condition can be reformulated as $\sum_{s\in
S}\left\vert d(x,s)-d(y,s)\right\vert \geq k,$ which means that every pair of
vertices is distinguished either by a large number of sensors $s\in S$ with a
small distance difference or by a small number of sensors $s\in S$ with larger
distance difference. In this paper, we provided the integer linear programming
model for this variation of metric dimension. Such model can be used to make
implementations for computing or approximating the value of the weak
$k$-metric dimension of other more complicated families of graphs, in which
exact values are not yet known. The influence of the existence of true or
false twins in a graph to its weak $k$-metric dimension is also considered.

Next, we compared the weak $k$-metric dimension and the $k$-metric dimension,
and established that $\dim(G)\leq\operatorname*{wdim}_{k}(G)\leq\dim_{k}(G).$
Here, we find the following problem interesting.

\begin{problem}
Determine the graphs $G$ with $\operatorname*{wdim}_{k}(G)=\dim_{k}(G)$ for a
given $k$.
\end{problem}

Another interesting comparison is the one concerning the weak $k$-metric
dimension and the local $k$-metric dimension, where we established
$\operatorname{ldim}_{k}(G)\leq\operatorname*{wdim}_{k}(G)$. Here, the
following natural problem arises.

\begin{problem}
Determine the graphs $G$ which satisfy $\operatorname{ldim}_{k}%
(G)=\operatorname*{wdim}_{k}(G)$ for a given $k$.
\end{problem}

We also considered the question of the maximum value of $k$ for which the weak
$k$-metric dimension is defined for some simple graphs such as path, cycle,
star, complete graph and complete bipartite graph, and also grid graphs and
trees. For the same families of graphs, we established the exact value of the
weak $k$-metric dimension for every feasible $k.$

For the classical metric dimension the variants such as edge metric dimension
\cite{TratnikEdge} and mixed metric dimension \cite{Kelm} were introduced,
defined as the cardinality of a smallest set $S\subseteq V(G)$ which
distinguishes all pairs of edges and all pairs of vertices and edges,
respectively. A weak version for both variants can be introduced in a similar
manner. First, we define the distance between a vertex $u$ and an edge $e=vw$
of a graph $G$ by $d(u,e)=\min\{d(u,v),d(u,w)\}.$

\begin{definition}
For a graph $G,$ a \emph{weak edge }$k$-\emph{metric dimension}
$\operatorname{wedim}_{k}(G)$ (resp. \emph{weak mixed }$k$-\emph{metric
dimension} $\operatorname{wmdim}_{k}(G)$) is the cardinality of a smallest set
$S\subseteq V(G)$ such that $\sum_{s\in S}\left\vert d(x,s)-d(y,s)\right\vert
\geq k$ for every $x,y\in E(G)$ (resp. for every $x,y\in V(G)\cup E(G)$).
\end{definition}

For both weak edge and weak mixed $k$-metric dimension, an integer linear
programming model can be easily formulated by slightly modifying the model
(\ref{For_ILP}). Namely, for a set $S\subseteq V(G)$ we define an integer
variable $x_{i}=1$ if $v_{i}\in S$, and $x_{i}=0$ otherwise. The problem of
finding the weak edge $k$-metric dimension graph $G$ is formulated as
\[%
\begin{tabular}
[c]{ll}
& $\min\sum_{i=1}^{n}x_{i}\medskip$\\
s.t. & $\sum\limits_{i=1}^{n}\left\vert d(e_{a},v_{i})-d(e_{b},v_{i}%
)\right\vert \cdot x_{i}\geq k$ for any $e_{a},e_{b}\in E(G),$\\
& $x_{i}\in\{0,1\}$ for $1\leq i\leq n.$%
\end{tabular}
\ \ \ \ \ \ \
\]
To formulate the ILP for the weak mixed $k$-metric dimension, one just needs
to expand the first constraint to all pairs from $V(G)\cup E(G),$ namely%
\[%
\begin{tabular}
[c]{ll}
& $\min\sum_{i=1}^{n}x_{i}\medskip$\\
s.t. & $\sum\limits_{i=1}^{n}\left\vert d(m_{a},v_{i})-d(m_{b},v_{i}%
)\right\vert \cdot x_{i}\geq k$ for any $m_{a},m_{b}\in V(G)\cup E(G),$\\
& $x_{i}\in\{0,1\}$ for $1\leq i\leq n.$%
\end{tabular}
\
\]
Regarding these dimensions, the following problems seem interesting.

\begin{problem}
Characterize families of graphs for which weak vertex and weak edge $k$-metric
dimensions differ.
\end{problem}

\noindent Similar characterizations have been done for the clasical vertex
metric dimension and its edge variant, so it would be interesting to compare
the results.

Finally, since usually the problems concerning computing any metric dimension
related parameter are NP-hard, it is probably not surprising that finding the
weak $k$-metric dimension is such too. Nevermind, the following problem is
worthy of considering.

\begin{problem}
Which is the complexity of computing the weak $k$-metric dimension of graphs?
Can this problem be polynomially approximated?
\end{problem}

\bigskip

\bigskip\noindent\textbf{Acknowledgments.}~~I.P. was partially supported by
the Slovenian Research Agency program No. P1-0297. J.S. and R.\v{S}. have been
partially supported by the Slovenian Research Agency ARRS program\ P1-0383 and
ARRS project J1-3002, J.S. also acknowledges the support of Project
KK.01.1.1.02.0027, a project co-financed by the Croatian Government and the
European Union through the European Regional Development Fund - the
Competitiveness and Cohesion Operational Programme. I.G.Y. has been partially
supported by the Spanish Ministry of Science and Innovation through the grant PID2019-105824GB-I00.

\end{document}